\theoremstyle{plain}
\newtheorem{theorem}{Theorem}[section]
\newtheorem*{quest}{Main Question}
\newtheorem{corollary}[theorem]{Corollary}
\newtheorem{obs}[theorem]{Observation}
\theoremstyle{definition}
\numberwithin{equation}{section}
\renewcommand{\div}{\operatorname{div_{\mathbb{H}}}}
\newcommand{\eucl}{\operatorname{eucl}}
\newcommand{\ipg}[2]{\langle #1,#2 \rangle_\mathbb{G}}
\newcommand{\iph}[2]{\langle #1,#2 \rangle_\mathbb{H}}
\newcommand{\tp}{\texttt{p}}
\renewcommand{\span}{\operatorname{span}}
\newcommand{\divergence}{\operatorname{div}}
\def\abs#1{\left|#1\right|}
\def\norm#1{\left\|#1\right\|}
\newcommand{\defeq}{\vcentcolon=}
\begin{document}

\title[Generalizations of the Drift Laplace Equation]{Generalizations of the Drift Laplace Equation in the Heisenberg Group and a Class of Grushin-Type Spaces}

\author{Thomas Bieske}
\author{Keller Blackwell}
\address{Department of Mathematics\\
University of South Florida\\ 
Tampa, FL 33620, USA}
\email{tbieske@mail.usf.edu}
\address{Department of Mathematics\\
University of South Florida\\ 
Tampa, FL 33620, USA}
\email{kellerb@mail.usf.edu}

\subjclass[2010]
{Primary 53C17, 35H20, 35A08; Secondary 22E25, 17B70}
\keywords{p-Laplace equation, Heisenberg group, Grushin-type plane}
\date{May 29,2019}

\begin{abstract}
    We find fundamental solutions to $\tp$-Laplace equations with drift terms in the Heisenberg group and Grushin-type planes. These solutions are natural generalizations to the fundamental solutions discovered by Beals, Gaveau, and Greiner for the Laplace equation with drift term. Our results are independent of the results of Bieske and Childers, in that Bieske and Childers consider a generalization that focuses on a $\tp$-Laplace-type equation while we primarily concentrate on a generalization of the drift term. 
\end{abstract}
\maketitle

\section{Introduction and Motivation}
In \cite{BGG}, Beals, Gaveau, and Greiner establish a formula for the fundamental solution to the Laplace equation with drift term in a large class of sub-Riemannian spaces. (See Sections 2 and 3 for definitions and further discussion.) In \cite{B:C}, Bieske and Childers expanded these results by invoking a $\tp$-Laplace generalization that encompasses the formulas of \cite{BGG} and discovered a negative result \cite[Theorems 4.1, 4.2]{B:C}.  In this paper, we focus on that negative result and produce a natural generalization of the $\tp$-Laplace equation with drift term. Our solutions are stable under limits when $\tp\to\infty$ and when the drift parameter $L\to 0$ (which is the standard $\tp$-Laplace equation). 

This paper is the result of an undergraduate research project by the second author under the direction of the first. The second author would like to thank the University of South Florida Honors College and the Department of Mathematics and Statistics for their support and research opportunities. 

\section{The Environments}
We concern ourselves with two sub-Riemannian environments, the Heisenberg group and Grushin-type planes, which are 2-dimensional Grushin-type spaces. We will recall the construction of these spaces and then highlight their main properties.

\subsection{The Heisenberg Group}
We begin with $\mathbb{R}^3$ using the coordinates $(x_1,x_2,x_3)$ and consider the linearly
independent vector fields $ \{X_1, X_2, X_3\}$, defined by:
\begin{equation*}
 X_1 = \dfrac{\partial}{\partial x_1} - \frac{x_2}{2}\dfrac{\partial}{ \partial x_3}, \; X_2 = \dfrac{\partial}{\partial x_2} + \frac{x_1}{2}\dfrac{\partial}{\partial x_3},\; \text{and } X_3 = \dfrac{\partial}{\partial x_3}
\end{equation*} 
which obey the relation
\begin{equation*}
 	[X_1,X_2] = X_3. 
\end{equation*}
We then have a Lie Algebra denoted $h_1$ that decomposes as a direct sum 
$h_1 = V_1 \oplus V_2$
where $V_1 = \span\lbrace X_1, X_2 \rbrace$ and $V_2 = \span\lbrace X_3 \rbrace$. The Lie algebra is statified; i.e., $[V_1, V_1] = V_2$ and $[V_1, V_2] = 0$.
We endow $h_1$ with an inner
product $\iph{\cdot \,}{\cdot}$ and related norm $\|\cdot\|_{\mathbb{H}}$ so that this basis is orthonormal. 

The corresponding Lie Group is called the general Heisenberg group of dimension $1$ and is denoted by $\mathbb{H}^1$. With this choice of vector fields the exponential map is the identity map, so that for any $p,q$ in $\mathbb{H}^1$, written as $p=(x_1, x_2, x_3)$ and $q=(\widehat{x_1}, \widehat{x_2}, \widehat{x_3})$ the group multiplication law is given by 
\begin{equation*}
p \cdot q = \left( x_1+\widehat{x_1}, x_2+\widehat{x_2}, x_{3}+ \widehat{x_3} + \frac{1}{2}
(x_1\widehat{x_2} - x_{2} \widehat{x_1}) \right).
\end{equation*}

The natural metric on $\mathbb{H}^1$ is the Carnot-Carath\'{e}odory metric  given by 
$$d_C(p,q)= \inf_{\Gamma} \int_{0}^{1} \| \gamma '(t) \|_{\mathbb{H}}\; dt $$
where the set $\Gamma$ is the set of all curves $ \gamma $ such that $\gamma (0) = p, \gamma (1) = q$ and $\gamma'(t) \in V_1$.  By Chow's theorem (See, for example, \cite{BR:SRG}.) any two points can be connected by such a curve, which makes $d_C(p,q)$ a left-invariant metric on $\mathbb{H}^1$.

Given a smooth function $u:\mathbb{H}^1 \to \mathbb{R}$, we define the horizontal gradient by
$$\nabla_0 u = (X_1u,X_2u).$$

Additionally, given a vector field $F=\sum_{i=1}^{2}f_iX_i+f_{3}X_3$, we define the Heisenberg divergence of $F$, denoted $\div F$, 
by $$\div F = \sum_{i=1}^{2} X_if_i.$$ 

A quick calculation shows that when $f_{3}=0$, we have $$\div F = \textmd{div}_{\eucl}F$$ where 
$\textmd{div}_{\eucl}$ is the standard Euclidean divergence. The main operator we are concerned with is the 
horizontal $\tp$-Laplacian for
$1< \tp <\infty$ defined by \begin{eqnarray}
\Delta_\tp u & = & \div(\|\nabla_{0} u\|_{\mathbb{H}}^{\tp-2}\nabla_{0}u)  =  \sum_{i=1}^{2}X_i\big(\|\nabla_{0} u\|_{\mathbb{H}}^{\tp-2}X_iu\big) \nonumber\\
& = & \frac{\tp-2}{2}\|\nabla_{0} u\|_{\mathbb{H}}^{\tp-4}\sum_{i=1}^{2}X_i\|\nabla_{0} u\|_{\mathbb{H}}^{2}X_iu+ \|\nabla_{0} u\|_{\mathbb{H}}^{\tp-2}\sum_{i=1}^{2}X_iX_iu.   \label{heisenbergplap}
\end{eqnarray}

For a more complete treatment of the Heisenberg group, the interested
reader is directed to \cite{BR:SRG}, \cite{B:HG}, \cite{F:SE}, \cite{FS:HSHG}
\cite{G:MS}, \cite{H:CCG}, \cite{K:LGHT}, \cite{St:HA} and the references therein.

\subsection{Grushin-type planes}
The Grushin-type planes differ from the Heisenberg group in that Grushin-type planes lack an algebraic group law. We begin with $\mathbb{R}^{2}$, possessing coordinates $(y_1, y_2)$, $a\in \mathbb{R}$, $c\in \mathbb{R}\setminus\{0\}$ and $n\in \mathbb{N}$.  We use them to construct the vector fields:
\begin{equation*}
Y_1  =  \frac{\partial}{\partial y_1} \ \textmd{and}\ \
Y_2  =   c(y_1-a)^n\frac{\partial}{\partial y_2}.
\end{equation*}
For these vector fields, the only (possibly) nonzero Lie bracket is
\begin{equation*}
[Y_1,Y_2]=cn(y_1-a)^{n-1}\frac{\partial}{\partial y_2}.
\end{equation*}
Because $n\in \mathbb{N}$, it follows that H\"{o}rmander's condition is satisfied by these vector fields.

We will put a (singular) inner product on $\mathbb{R}^{2}$, denoted $\ipg{\cdot}{\cdot}$,  with related norm $\|\cdot\|_{\mathbb{G}}$, so that the collection $\{Y_{1}, Y_{2}\}$ 
forms an orthonormal basis. We then have a sub-Riemannian space that we will call $g_{n}$, which is also the tangent space to a generalized Grushin-type plane $\mathbb{G}_n$. Points in $\mathbb{G}_n$ will also be denoted by
$p=(y_{1}, y_{2})$.  The Carnot-Carath\'{e}odory distance on $\mathbb{G}_n$ is defined for points $p$ and $q$ as follows:
\begin{eqnarray*}
d_{\mathbb{G}}(p,q)=\inf_{\Gamma}\int \|\gamma'(t)\|_{\mathbb{G}}\;dt
\end{eqnarray*} 
with $\Gamma$ the set of curves $\gamma$ such that $\gamma(0)=p$, $\gamma(1)=q$ and
$
\gamma'(t)\in \span\{Y_{1}(\gamma(t)),Y_{2}(\gamma(t))\}.
$
By Chow's theorem, this is an honest metric.

We shall now discuss calculus on the Grushin-type planes. Given a smooth function $f$ on $\mathbb{G}_n$, we define the horizontal gradient of $f$ as
\begin{equation*}
 \nabla_{0} f(p) = \big(Y_1f(p),Y_2f(p)\big).
\end{equation*}

Using these derivatives, we consider a key operator on $C^2_{\mathbb{G}}$ functions, namely the \tp-Laplacian for $1<\tp<\infty$, given by
\begin{eqnarray}
\Delta_\tp f & = & \divergence_\mathbb{G} (\|\nabla_{0} f\|_{\mathbb{G}}^{\tp-2}\nabla_{0}f)   =  Y_1\big(\|\nabla_{0} f\|_{\mathbb{G}}^{\tp-2}Y_1f\big)+Y_2\big(\|\nabla_{0} f\|_{\mathbb{G}}^{\tp-2}Y_2f\big)  \nonumber\\
& = & \frac{1}{2}(\tp-2)\|\nabla_{0} f\|_{\mathbb{G}}^{\tp-4}\big(Y_1\|\nabla_{0} f\|_{\mathbb{G}}^{2}Y_1f+Y_2\|\nabla_{0} f\|_{\mathbb{G}}^{2}Y_2f\big)  \label{grushinplap}\\
& & \mbox{} +\|\nabla_{0} f\|_{\mathbb{G}}^{\tp-2}\big(Y_1Y_1f+Y_2Y_2f\big).  \nonumber
\end{eqnarray}

\section{Motivating Results}

\subsection{The Heisenberg Group}
Capogna, Danielli, and Garofalo \cite{CDG} proved the following theorem.
\begin{theorem}[\cite{CDG}]\label{Heisenbergplap}
Let $1<\tp<\infty$. In $\mathbb{H}^1$, let $$u(x_1,x_2,x_3)=\left( x_1^2 + x_2^2 \right)^2+16 x_3^2.$$ For $\tp \neq 4$, let $$\eta_{\tp}=\frac{4-\tp}{4(1-\tp)},$$ and let
\begin{eqnarray*}
\zeta_{\tp}=\left\{\begin{array}{cc}
u(x_1,x_2,x_3)^{\eta_{\tp}} & \tp \neq 2n+2 \\
\log u(x_1,x_2,x_3) & \tp = 2n+2.
\end{array}\right.
\end{eqnarray*}
Then we have $\Delta_{\tp}\zeta_{\tp}= C\delta_0$ for some constant $C$ in the sense of distributions.
\end{theorem}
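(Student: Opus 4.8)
The plan is to recognize $u$ as the fourth power of the natural homogeneous gauge on $\mathbb{H}^1$ and to treat $\zeta_\tp$ as a function of that gauge alone. Put $N \defeq u^{1/4} = \big((x_1^2+x_2^2)^2 + 16 x_3^2\big)^{1/4}$ and $\rho^2 \defeq x_1^2 + x_2^2$, so that $N$ is smooth and strictly positive on $\mathbb{H}^1\setminus\{0\}$. The first task is to record, by a direct computation with $X_1$ and $X_2$, the gauge identities
\begin{equation*}
\|\nabla_0 N\|_{\mathbb{H}}^2 = \frac{\rho^2}{N^2}, \qquad \iph{\nabla_0\|\nabla_0 N\|_{\mathbb{H}}^2}{\nabla_0 N} = 0, \qquad X_1X_1N + X_2X_2N = \frac{3}{N}\,\|\nabla_0 N\|_{\mathbb{H}}^2 .
\end{equation*}
The middle identity is the cancellation that makes the scheme work, and the $3$ in the last is $Q-1$, where $Q = 4$ is the homogeneous dimension of $\mathbb{H}^1$.

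Granting these, the next step is to prove $\zeta_\tp$ is $\tp$-harmonic on $\mathbb{H}^1\setminus\{0\}$. For $\phi\in C^2((0,\infty))$, substituting $\phi(N)$ into \eqref{heisenbergplap} and applying the chain rule together with the three identities (the term carrying $\iph{\nabla_0\|\nabla_0 N\|_{\mathbb{H}}^2}{\nabla_0 N}$ dropping out) collapses the $\tp$-Laplacian to
\begin{equation*}
\Delta_\tp\big(\phi(N)\big) = \|\nabla_0 N\|_{\mathbb{H}}^{\tp}\Big( g'(N) + \frac{3}{N}\,g(N)\Big), \qquad g \defeq |\phi'|^{\tp-2}\phi' .
\end{equation*}
Hence, away from the characteristic axis $\{\rho = 0\}$ (on which $\nabla_0 N$ vanishes), $\phi(N)$ is $\tp$-harmonic exactly when $g' + \tfrac{3}{N}g = 0$, i.e. $g(N) = cN^{-3}$. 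Solving $|\phi'|^{\tp-2}\phi' = cN^{-3}$ gives $\phi'(N) = c'N^{-3/(\tp-1)}$, so $\phi(N)$ is a constant multiple of $N^{(\tp-4)/(\tp-1)}$ when $\tfrac{\tp-4}{\tp-1}\neq 0$ and of $\log N$ when $\tp = 4$. Since $N^{(\tp-4)/(\tp-1)} = u^{\eta_\tp}$ and $\log N = \tfrac14\log u$, and neither a constant factor nor an additive constant affects $\Delta_\tp$, this proves $\Delta_\tp\zeta_\tp = 0$ on $\mathbb{H}^1\setminus\{0\}$ both for $\tp\neq 4$ and in the critical case $\tp = 2n+2 = 4$.

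The last step is to identify the distribution $\Delta_\tp\zeta_\tp$. First I would check that the horizontal flux field $F_\tp \defeq \|\nabla_0\zeta_\tp\|_{\mathbb{H}}^{\tp-2}\nabla_0\zeta_\tp = cN^{-3}\|\nabla_0 N\|_{\mathbb{H}}^{\tp-2}\nabla_0 N$ lies in $L^1_{\mathrm{loc}}(\mathbb{H}^1)$ for every $1 < \tp < \infty$: near the origin $\|F_\tp\|_{\mathbb{H}} = |c|\,\rho^{\tp-1}N^{-\tp-2} \le |c|\,N^{\tp-4}$ since $\rho\le N^2$, and $\int_{\{N<1\}}N^s\,dx < \infty$ whenever $s > -Q = -4$. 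Thus $\langle\Delta_\tp\zeta_\tp,\varphi\rangle \defeq -\int_{\mathbb{H}^1}\iph{F_\tp}{\nabla_0\varphi}\,dx$ is a bona fide distribution, which by the previous step vanishes on $\mathbb{H}^1\setminus\{0\}$ and is therefore a finite linear combination of $\delta_0$ and its derivatives. To rule out the derivatives I would exploit the anisotropic dilations $\delta_\lambda(x_1,x_2,x_3) = (\lambda x_1,\lambda x_2,\lambda^2 x_3)$: from $N\circ\delta_\lambda = \lambda N$ one sees that $\zeta_\tp\circ\delta_\lambda$ differs from $\zeta_\tp$ by a multiplicative constant (or, when $\tp = 4$, an additive constant), whence the scaling law $\Delta_\tp(v\circ\delta_\lambda) = \lambda^{\tp}(\Delta_\tp v)\circ\delta_\lambda$ together with the Jacobian $\lambda^{Q}$ of $\delta_\lambda$ forces $(\Delta_\tp\zeta_\tp)\circ\delta_\lambda = \lambda^{-Q}\,\Delta_\tp\zeta_\tp$. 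This is exactly the homogeneity of $\delta_0$, and it is incompatible with any derivative of $\delta_0$ of positive order, each of which carries a strictly more negative power of $\lambda$; hence $\Delta_\tp\zeta_\tp = C\delta_0$ for a constant $C$, which one checks is nonzero by evaluating the $\varepsilon$-independent flux of $F_\tp$ across a single gauge sphere $\{N = \varepsilon\}$.

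I expect the last step to be the main obstacle. The pointwise work of the first two steps is routine once the cancellation $\iph{\nabla_0\|\nabla_0 N\|_{\mathbb{H}}^2}{\nabla_0 N} = 0$ is noticed. The delicate points are: making the integration by parts rigorous — equivalently, a divergence theorem over gauge balls $\{N < \varepsilon\}$ — in the degenerate sub-Riemannian setting; checking that $\div F_\tp = 0$ holds distributionally across the characteristic axis $\{\rho = 0\}$, where $\nabla_0\zeta_\tp$ vanishes and $F_\tp$ may fail to be $C^1$ when $\tp < 2$; and, for the sharper conclusion $C \neq 0$, carrying out the flux computation over $\{N = \varepsilon\}$.
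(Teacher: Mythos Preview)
The paper does not give its own proof of this statement: Theorem~\ref{Heisenbergplap} is quoted from \cite{CDG} as a motivating result, with no argument supplied. The nearest thing to a proof in the paper is the proof of the generalization in Section~4, which for $L=0$ specializes to the present statement (since then $\eta=\tau=\eta_{\tp}$ and $v^\eta w^\tau=(vw)^{\eta_\tp}=u^{\eta_\tp}$). That argument proceeds by writing everything in terms of the complex pair $v,w$, computing $\Delta_\tp$ directly, and then identifying the distribution via the $\varepsilon$-regularization $v_\varepsilon,w_\varepsilon$ together with the density argument of \cite[Theorem~7.5(c)]{BGG}.

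Your route is genuinely different and, for the $L=0$ case, more transparent: you work with the real gauge $N=u^{1/4}$, reduce $\Delta_\tp$ on radial functions of $N$ to the first-order ODE $g'+\tfrac{3}{N}g=0$ via the three gauge identities (the crucial one being the vanishing $\infty$-Laplacian of $N$), and then pin down the distribution by anisotropic scaling rather than by an explicit mollification. This buys a cleaner conceptual picture---the number $3=Q-1$ appears for the right reason---and avoids citing \cite{BGG} for the endgame. The price is that the radial-in-$N$ ansatz does not survive the introduction of the drift: for $L\neq 0$ the solution $v^\eta w^\tau$ with $\eta\neq\tau$ is no longer a function of $N$ alone, which is exactly why the paper's complex-factorization machinery is needed there.

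One small slip: your integrability estimate uses ``$\rho\le N^2$'', which is false (take $x_3=0$, $\rho=\tfrac12$). The correct inequality is $\rho\le N$, coming from $N^4=\rho^4+16x_3^2\ge\rho^4$; this gives $\|F_\tp\|_{\mathbb{H}}\le |c|\,N^{-3}$, and $N^{-3}\in L^1_{\mathrm{loc}}$ since $-3>-Q=-4$, so the argument still goes through.
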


In the Heisenberg Group, Beals, Gaveau, and Greiner \cite{BGG} extend this equation as shown in the following theorem (cf. \cite[Theorem 3.4]{B:C}). 

\begin{theorem}[\cite{BGG}]\label{HeisenbergD2lap}
Let $L \in \mathbb{R}$, $\abs{L} \ne 1$. Consider the following constants,
\begin{equation*}
\eta = \frac{L-1}{2} \ \ \textmd{and}\ \ \tau  =  \frac{-(L+1)}{2} 
\end{equation*}
together with the functions,
\begin{eqnarray*}
v(x_1,x_2,x_3)  =  \left( x_1^2 + x_2^2 \right) - 4 i x_3 & \textmd{and} &
w(x_1,x_2,x_3)  =  \left( x_1^1 + x_2^2 \right) + 4 i x_3
\end{eqnarray*}
to define our main function, $u_{2,L}(x_1,x_2,x_3)$ given by
$$u_{2,L}(x_1,x_2,x_3)= v(x_1,x_2,x_3)^\eta w(x_1,x_2,x_3)^\tau.$$ 
Then we have $\Delta_{2}u_{2,L}+iL[X_1,X_2]u_{2,L}= C\delta_0$ for some constant $C$ in the sense of distributions.
\end{theorem}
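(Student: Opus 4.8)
The plan is to verify directly that $u_{2,L}$ satisfies the equation classically away from the origin, to check that $u_{2,L}$ is locally integrable, and then to identify the resulting distribution by a support-and-homogeneity argument.

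First I would set $\rho^2 = x_1^2 + x_2^2$ and note that $v = \rho^2 - 4ix_3$ and $w = \rho^2 + 4ix_3$ vanish only at the origin and take values in $\mathbb{C}\setminus(-\infty,0]$, so that $u_{2,L} = v^\eta w^\tau$ (principal powers) is well defined and smooth on $\mathbb{H}^1\setminus\{0\}$. A short computation gives
\[
X_1 v = 2(x_1+ix_2),\quad X_2 v = -2i(x_1+ix_2),\qquad X_1 w = 2(x_1-ix_2),\quad X_2 w = 2i(x_1-ix_2),
\]
whence the crucial identities
\[
(X_1 v)^2 + (X_2 v)^2 = 0,\qquad (X_1 w)^2 + (X_2 w)^2 = 0,\qquad X_1v\,X_1w + X_2v\,X_2w = 8\rho^2 = 4(v+w),
\]
together with $X_1^2v + X_2^2v = X_1^2w + X_2^2w = 4$ and $[X_1,X_2]v = -4i$, $[X_1,X_2]w = 4i$. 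Writing $P = \Delta_2 + iL\,[X_1,X_2]$ and combining the Leibniz identity $P(fg) = (Pf)g + f(Pg) + 2\big(X_1f\,X_1g + X_2f\,X_2g\big)$ with the chain rule $X_i(v^\eta) = \eta v^{\eta-1}X_iv$ (and its analogue for $w^\tau$), the identities above collapse everything to
\[
Pu_{2,L} = \big(4\eta(1+L) + 8\eta\tau\big)\,v^{\eta-1}w^\tau + \big(4\tau(1-L) + 8\eta\tau\big)\,v^\eta w^{\tau-1}.
\]
Substituting $\eta = \tfrac{L-1}{2}$ and $\tau = -\tfrac{L+1}{2}$ makes each coefficient vanish (for example $4\eta(1+L) + 8\eta\tau = 4\eta[(1+L)+2\tau] = 0$), so $Pu_{2,L} \equiv 0$ on $\mathbb{H}^1\setminus\{0\}$.

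Next, since $\eta + \tau = -1$ one has $|u_{2,L}| = (\rho^4 + 16x_3^2)^{-1/2} = \mathcal{N}^{-2}$, where $\mathcal{N} = (\rho^4 + 16x_3^2)^{1/4}$ is a homogeneous norm of degree $1$ for the dilations $\delta_r(x_1,x_2,x_3) = (rx_1, rx_2, r^2x_3)$; since the homogeneous dimension of $\mathbb{H}^1$ is $Q = 4 > 2$, the function $\mathcal{N}^{-2}$ is locally integrable, so $u_{2,L}$ defines a distribution and $Pu_{2,L}$ makes distributional sense. By the first step this distribution vanishes on $\mathbb{H}^1\setminus\{0\}$, hence is supported at $\{0\}$ and so is a finite linear combination of partial derivatives of $\delta_0$. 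But $v$ and $w$ are $\delta_r$-homogeneous of degree $2$, so $u_{2,L}$ is $\delta_r$-homogeneous of degree $2(\eta+\tau) = -2$; since $X_1,X_2$ lower the homogeneous degree by $1$ and $[X_1,X_2] = X_3$ by $2$, the operator $P$ lowers it by $2$, so $Pu_{2,L}$ is $\delta_r$-homogeneous of degree $-4 = -Q$. Among distributions supported at the origin, only constant multiples of $\delta_0$ are $\delta_r$-homogeneous of degree $-Q$ (every nontrivial derivative of $\delta_0$ has strictly smaller homogeneous degree), and therefore $Pu_{2,L} = C\delta_0$ for some constant $C$.

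Finally, to exhibit $C$ explicitly---and in particular that $C\ne 0$, so that $u_{2,L}$ genuinely is a fundamental solution---I would, for $\varphi\in\ci(\mathbb{H}^1)$, write $\langle Pu_{2,L},\varphi\rangle = \lim_{\varepsilon\to 0}\int_{\{\mathcal{N}>\varepsilon\}} u_{2,L}\,(P^{t}\varphi)\,dx$ with $P^{t}$ the formal transpose of $P$, integrate by parts to move the derivatives back onto $u_{2,L}$ so that the volume term vanishes by the first step, and read off $C$ from the limit of the boundary integral over $\{\mathcal{N}=\varepsilon\}$ (the case $L = 0$, where $\eta = \tau = -\tfrac12$ and $u_{2,0} = \mathcal{N}^{-2}$ reproduces Folland's fundamental solution of $\Delta_2$, serves as a cross-check). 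I expect the two real difficulties to be the bookkeeping in the first step---the cancellations hinge precisely on $(X_iv)^2$ summing to zero, on $X_iv\,X_iw$ summing to $4(v+w)$, and on the exact values of $\eta$ and $\tau$---and the boundary computation that pins down $C$; the local-integrability and homogeneity steps are soft.
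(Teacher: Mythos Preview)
Your proposal is correct, and it takes a somewhat different route from the paper. The paper does not prove Theorem~\ref{HeisenbergD2lap} directly---it is quoted from \cite{BGG}---but it does prove the $\tp$-Laplace generalization (Theorem~4.1), whose $\tp=2$ case is precisely this statement. Comparing your argument to that proof: for the vanishing away from the origin, the paper carries out a brute-force expansion of $X_iu$, $X_iX_ju$, $\|\nabla_0 u\|^2$ and their combinations, whereas you organize the same computation around the structural identities $(X_1v)^2+(X_2v)^2=0$, $X_1v\,X_1w+X_2v\,X_2w=4(v+w)$, and the Leibniz rule for $P$, which makes the cancellation transparent. For the distributional identification, the paper regularizes by replacing $\rho^2$ with $\rho^2+\varepsilon^2$ in $v,w$, computes $\mathcal{H}_{\tp,L}u_\varepsilon$ explicitly, and then invokes \cite[Theorem~7.5(c)]{BGG} to pass to the limit; you instead use local integrability, support at the origin, and the exact dilation homogeneity $-Q$ to force $Pu_{2,L}=C\delta_0$ directly. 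Your homogeneity argument is cleaner and avoids the regularization machinery, but it relies essentially on $P$ being linear and on $u_{2,L}$ being exactly homogeneous; the paper's $\varepsilon$-regularization approach, while heavier, is what allows the extension to the nonlinear $\tp$-Laplacian in Theorem~4.1, where a pure homogeneity-of-distributions argument is not available. Your final remark about extracting $C$ via the boundary integral over $\{\mathcal N=\varepsilon\}$ is in the same spirit as the paper's density computation.
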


\subsection{Grushin-type Planes}
Bieske and Gong \cite{BG} proved the following in the Grushin-type planes.
\begin{theorem}[\cite{BG}]\label{Grushinplap}
Let $1<\tp<\infty$ and define $$f(y_1,y_2)=c^2(y_1-a)^{(2n+2)}+(n+1)^2 (y_2-b)^2.$$  
For $\tp \neq n+2$, consider $$\tau_{\tp}=\frac{n+2-\tp}{(2n+2)(1-\tp)}$$
so that in $\mathbb{G}_n$  we have the well-defined function
\begin{eqnarray*}
\psi_{\tp}=\left\{\begin{array}{cc}
f(y_1,y_2)^{\tau_{\tp}} & \tp \neq n+2 \\
\log f(y_1,y_2) & \tp = n+2.
\end{array}\right.
\end{eqnarray*}
Then, $\Delta_{\tp}\psi_{\tp}= C\delta_0$ for some constant $C$ in the sense of distributions.
\end{theorem}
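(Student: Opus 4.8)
The plan is to split the statement into a pointwise part and a distributional part. Writing $s = y_1 - a$ and differentiating $f$ directly, the whole argument rests on the single algebraic identity
\[
\|\nabla_0 f\|_{\mathbb{G}}^{2} = (Y_1 f)^2 + (Y_2 f)^2 = 4c^2(n+1)^2 (y_1-a)^{2n}\, f,
\]
which follows by factoring $(Y_1 f)^2 = c^4(2n+2)^2 s^{4n+2}$ and $(Y_2 f)^2 = 4c^2(n+1)^4 s^{2n}(y_2-b)^2$ and recognizing the common bracket as $4c^2(n+1)^2 s^{2n}$ times $f$ itself. This identity exhibits $f$ as a constant multiple of the $(2n+2)$-th power of a gauge homogeneous of degree one under the natural Grushin dilations, so that $\psi_{\tp}$ is formally that gauge raised to the power $(\tp - Q)/(\tp-1)$ with $Q = n+2$ the homogeneous dimension; this already explains why $\tp = n+2$ is the critical (logarithmic) exponent.

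First I would establish that $\Delta_{\tp}\psi_{\tp} = 0$ on $\mathbb{G}_n\setminus\{(a,b)\}$. Writing $\psi_{\tp} = g(f)$ with $g(t) = t^{\tau_{\tp}}$ (or $g(t) = \log t$), the chain rule for the $\tp$-Laplacian gives
\[
\Delta_{\tp}\, g(f) = \phi'(f)\,\|\nabla_0 f\|_{\mathbb{G}}^{\tp} + \phi(f)\,\Delta_{\tp} f, \qquad \phi(t) = |g'(t)|^{\tp-2}g'(t).
\]
Using the identity above to compute $\|\nabla_0 f\|_{\mathbb{G}}^{\tp}$, together with a direct evaluation of $\Delta_{\tp} f$ (which reduces to $\Delta_{\tp} f = K\, s^{n\tp} f^{(\tp-2)/2}\big[(2n+1)\tp - n\big]$ for an explicit constant $K$), one finds that both surviving terms carry the common factor $s^{n\tp} f^{(\tau_{\tp}-1)(\tp-1)+(\tp-2)/2}$, and the remaining scalar factor is the bracket $2(n+1)(\tp-1)(\tau_{\tp}-1) + (2n+1)\tp - n$. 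Solving for the exponent that annihilates this bracket reproduces exactly $\tau_{\tp} = \dfrac{n+2-\tp}{(2n+2)(1-\tp)}$, confirming pointwise $\tp$-harmonicity; the identical computation with $g = \log$ collapses the bracket to $(n+2)-\tp$, which vanishes precisely at the critical exponent $\tp = n+2$, so the logarithmic branch is covered by the same mechanism.

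It remains to identify the distributional right-hand side, and this is where the real work lies. Because $\Delta_{\tp}$ is nonlinear, the statement must be read weakly: for $\varphi\in\ci$ one evaluates the (linear-in-$\varphi$) functional $-\int_{\mathbb{G}_n}\|\nabla_0\psi_{\tp}\|_{\mathbb{G}}^{\tp-2}\ipg{\nabla_0\psi_{\tp}}{\nabla_0\varphi}\,dy$. I would first check that $\|\nabla_0\psi_{\tp}\|_{\mathbb{G}}^{\tp-2}\nabla_0\psi_{\tp}$ is locally integrable near the pole $(a,b)$: its coordinate representation has size comparable to $\rho^{1-Q}$ in the gauge $\rho$, which is integrable, while its flux through a gauge sphere (surface measure of size $\rho^{Q-1}$) is scale-invariant. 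Exploiting that the Grushin divergence of a horizontal field coincides with the ordinary divergence of its coordinate representation $\widetilde V = \big(v_1,\ c(y_1-a)^n v_2\big)$, where $v_i = \|\nabla_0\psi_{\tp}\|_{\mathbb{G}}^{\tp-2}Y_i\psi_{\tp}$, I would excise a small gauge ball $B_{\epsilon}$ about $(a,b)$ and apply the Euclidean divergence theorem on $\mathbb{G}_n\setminus B_{\epsilon}$. The interior term vanishes by the pointwise harmonicity just proved, leaving only a flux integral over $\partial B_{\epsilon}$; since $\widetilde V$ is divergence-free on the annulus, this flux is independent of $\epsilon$, and replacing $\varphi$ by $\varphi(a,b)$ to leading order yields $\langle\Delta_{\tp}\psi_{\tp},\varphi\rangle = C\,\varphi(a,b)$, i.e. $\Delta_{\tp}\psi_{\tp} = C\delta_0$ with $\delta_0$ the Dirac mass at the pole and $C$ the common flux.

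The main obstacle is this last step. One must justify the integration by parts despite the singularity, controlling both the surface integrals over $\partial B_{\epsilon}$ and the truncated integral over the complement as $\epsilon\to 0$, confirm the scale-invariance that renders the flux $\epsilon$-independent, and verify that $C$ is a genuine nonzero constant rather than a spurious cancellation. Choosing $B_{\epsilon}$ to be a gauge ball, a level set of the homogeneous gauge, rather than a Euclidean ball is the natural device that makes the flux computation explicit and exhibits its dilation invariance. By contrast, the pointwise part is purely algebraic once the identity $\|\nabla_0 f\|_{\mathbb{G}}^{2} = 4c^2(n+1)^2(y_1-a)^{2n}f$ is in hand.
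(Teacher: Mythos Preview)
This theorem is not proved in the paper: it appears in Section~3 (``Motivating Results'') as a cited result from \cite{BG}, stated without proof, to motivate the new generalizations in Sections~4--5. Consequently there is no proof in the present paper to compare your proposal against.

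That said, your outline is the standard route and is essentially correct. The key identity
\[
\|\nabla_0 f\|_{\mathbb{G}}^{2} \;=\; 4c^2(n+1)^2\,(y_1-a)^{2n}\,f
\]
is valid (your factorization is right), and with it the pointwise vanishing of $\Delta_{\tp}\psi_{\tp}$ on $\mathbb{G}_n\setminus\{(a,b)\}$ reduces to the scalar balance you describe. For the distributional identification your flux argument over gauge balls is the right idea; the paper's own results (Theorems~4.1 and~5.1) handle the analogous step differently, by regularizing the solution via an $\varepsilon^2$-shift inside $g$ and $h$ (or $v$ and $w$), computing $\mathcal{G}_{\tp,L}$ applied to the regularized function explicitly, and then invoking the density argument of \cite[Theorem~7.5(c)]{BGG} to recognize a Dirac mass in the limit $\varepsilon\to 0$. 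Either method works here; the regularization approach avoids surface-integral bookkeeping at the price of an extra explicit computation, while your excision/flux approach makes the scale-invariance transparent but requires more care with the boundary terms.
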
 
In the Grushin-type planes, Beals, Gaveau and Greiner \cite{BGG} extend this equation as shown in the following theorem (cf. \cite[Theorem 3.2]{B:C}). 
\begin{theorem}[\cite{BGG}]\label{GrushinD2lap}
Let $L \in \mathbb{R}$, $\abs{L} \ne 1$. Consider the following quantities,
\begin{equation*}
\alpha = \frac{-n}{(2n+2)}(1+L) \ \ \textmd{and}\ \ \beta  =  \frac{-n}{(2n+2)}(1-L).
\end{equation*}
We use these constants with the functions
\begin{eqnarray*}
g(y_1,y_2) & = & c(y_1-a)^{n+1}+i(n+1)(y_2-b)\\ 
h(y_1,y_2) & = & c(y_1-a)^{n+1}-i(n+1)(y_2-b)
\end{eqnarray*}
to define our main function $f_{2,L}(y_1,y_2)$, given by
\begin{eqnarray*}
f_{2,L}(y_1,y_2) & = & g(y_1,y_2)^{\alpha}h(y_1,y_2)^{\beta}.
\end{eqnarray*}
Then, $\Delta_{2}f_{2,L}+iL[Y_1,Y_2]f_{2,L}=C\delta_0$ for some constant $C$ in the sense of distributions.
\end{theorem}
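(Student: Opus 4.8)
The plan is to follow the template behind Theorems~\ref{Grushinplap} and \ref{HeisenbergD2lap}: verify the equation pointwise off the pole $p_0=(a,b)$, record enough local integrability that the left-hand side is a bona fide distribution, and then run an excision argument to read off the right-hand side. Write $f\defeq f_{2,L}$. Since we are at $\tp=2$, the first term in \eqref{grushinplap} carries the vanishing factor $\tp-2$, so $\Delta_2 f=Y_1Y_1f+Y_2Y_2f=\divergence_\mathbb{G}(\nabla_0f)$; moreover $[Y_1,Y_2]f=Y_1Y_2f-Y_2Y_1f=\divergence_\mathbb{G}\big((Y_2f,-Y_1f)\big)$. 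Hence, away from $p_0$,
\[
\Delta_2 f+iL[Y_1,Y_2]f=\divergence_\mathbb{G}(\mathbf F),\qquad \mathbf F\defeq\big(Y_1f+iL\,Y_2f,\ Y_2f-iL\,Y_1f\big),
\]
and the crux is to show $\divergence_\mathbb{G}(\mathbf F)\equiv 0$ there. For this I would record the elementary identities $Y_1g=Y_1h=:P=c(n+1)(y_1-a)^n$, $Y_2g=iP$, $Y_2h=-iP$, $Y_1Y_1g=Y_1Y_1h=:Q=cn(n+1)(y_1-a)^{n-1}$, and $Y_2Y_2g=Y_2Y_2h=0$, expand $Y_iY_if$ via the product rule for $f=g^\alpha h^\beta$, and collect terms, using the two relations $\alpha+\beta=\tfrac{-n}{n+1}$ and $\alpha-\beta=\tfrac{-nL}{n+1}$ forced by the definitions of $\alpha,\beta$. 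The cancellation is exactly what singles out those exponents, and I expect this bookkeeping — together with the standard care in fixing branches of $g^\alpha$ and $h^\beta$, which is harmless since $g,h\neq 0$ off $p_0$ (cf.\ \cite{BGG}) — to be the main obstacle.

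Next, local integrability. Near $p_0$ I would use the anisotropic scaling $y_1-a\sim\varrho$, $y_2-b\sim\varrho^{\,n+1}$; equivalently, $|g|^2=|h|^2$ is precisely the Bieske--Gong function $f(y_1,y_2)$ of Theorem~\ref{Grushinplap}, so $\abs{f_{2,L}}=\abs{g}^{\alpha+\beta}=\abs{g}^{-n/(n+1)}$. One obtains $\abs{f_{2,L}}\sim\varrho^{-n}$ and $\abs{\nabla_0 f_{2,L}}\sim\varrho^{-n-1}$ while the area element scales like $\varrho^{\,n+1}\,d\varrho$, whence $f_{2,L}$ and $\nabla_0f_{2,L}$ (and so $\mathbf F$) are locally integrable and $f_{2,L}$ defines a distribution on $\mathbb{G}_n$. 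Note that $[Y_1,Y_2]f_{2,L}$ itself need not be locally integrable, which is why the divergence-form field $\mathbf F$ is the right object to carry through the excision step.

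Finally, the distributional identity. Fix $\phi\in\ci(\mathbb{G}_n)$. Since $\int_{\mathbb{G}_n}(Y_iv)u\,dy=-\int_{\mathbb{G}_n}v\,(Y_iu)\,dy$ for $u,v\in\ci(\mathbb{G}_n)$, the operator $\Delta_2$ is formally self-adjoint and $[Y_1,Y_2]$ is formally skew-adjoint with respect to Lebesgue measure, so
\[
\langle \Delta_2 f_{2,L}+iL[Y_1,Y_2]f_{2,L},\phi\rangle=\int_{\mathbb{G}_n} f_{2,L}\big(\Delta_2\phi-iL[Y_1,Y_2]\phi\big)\,dy .
\]
Excise a small anisotropic ball $B_\varepsilon$ about $p_0$; on $\mathbb{G}_n\setminus B_\varepsilon$ integrate by parts to move the derivatives back onto $f_{2,L}$. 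The interior integral equals $\int_{\mathbb{G}_n\setminus B_\varepsilon}\divergence_\mathbb{G}(\mathbf F)\,\phi\,dy=0$ by the first step, leaving integrals over $\partial B_\varepsilon$. Those carrying a derivative of $\phi$, and the part of the remaining integral with $\phi$ replaced by $\phi-\phi(p_0)$, tend to $0$ as $\varepsilon\to 0$ by the size estimates above and the continuity of $\phi$ (one checks $\int_{\partial B_\varepsilon}\abs{f_{2,L}}\,dS\to 0$ and $\abs{\phi-\phi(p_0)}\to 0$ uniformly on $\partial B_\varepsilon$). What survives is $-\phi(p_0)\int_{\partial B_\varepsilon}\langle\mathbf F,\nu\rangle\,dS$, and by the divergence theorem on an annulus $B_{\varepsilon'}\setminus B_\varepsilon$ together with $\divergence_\mathbb{G}(\mathbf F)=0$ this flux is independent of $\varepsilon$; calling its value $-C$, we obtain $\langle \Delta_2 f_{2,L}+iL[Y_1,Y_2]f_{2,L},\phi\rangle=C\,\phi(p_0)$, i.e.\ the asserted identity with $\delta_0$ the Dirac mass at $p_0$. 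If the value of $C$ is desired one evaluates the ($\varepsilon$-independent) flux on a convenient $B_\varepsilon$, for instance a superlevel set $\{f<\varepsilon\}$ of the Bieske--Gong function; otherwise it suffices to observe that $C$ is a constant.
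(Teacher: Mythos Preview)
Your plan is sound. The divergence-form rewriting $\Delta_2 f+iL[Y_1,Y_2]f=\divergence_\mathbb{G}(\mathbf F)$ is correct, the scaling estimates $|f_{2,L}|\sim\varrho^{-n}$, $|\nabla_0 f_{2,L}|\sim\varrho^{-n-1}$ against the homogeneous measure $\sim\varrho^{\,n+1}\,d\varrho$ give the needed local integrability, and the excision argument is the classical one. The formal-adjoint computation is right as well, since each $Y_i$ has divergence-free coefficients.

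Note, however, that the paper does not itself prove this theorem --- it is quoted from \cite{BGG} as a motivating result. The closest in-paper comparison is the proof of the $\tp$-generalization in Section~5, and there the distributional step is handled differently: rather than excising a ball and reading off a boundary flux, the paper regularizes the function via $g_\varepsilon=g+\varepsilon^2$, $h_\varepsilon=h+\varepsilon^2$, computes $\mathcal{G}_{\tp,L}f_\varepsilon$ explicitly as a smooth density carrying an overall factor of $\varepsilon^2$, and then invokes \cite[Theorem~7.5(c)]{BGG} to identify the $\varepsilon\to 0$ limit as a multiple of $\delta_0$. Your route is more self-contained (no appeal to an external limiting lemma) and makes the constant $C$ manifestly a flux invariant; the paper's regularization route, on the other hand, is what survives when one passes to the nonlinear $\tp$-Laplacian, where your duality step $\langle \Delta_\tp f,\phi\rangle=\langle f,\Delta_\tp\phi\rangle$ is no longer available.
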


\begin{obs}
We have the following well-known observations from \cite{B:C}. In $\mathbb{H}^1\setminus\{0\}$, $$\zeta_2 (x_1, x_2, x_3)= \left( \left(x_1^2 + x_2^2)^2 \right)^2 + 16 x_3^3 \right)^{-\frac{1}{2}}$$ solves $$\Delta_2 \zeta_2=0.$$
Also, $$ u_{2,L}(x_1,x_2,x_3)=v(x_1,x_2,x_3)^{\frac{L-1}{2}}w(x_1,x_2,x_3)^{-\frac{(L+1)}{2}}$$
solves $$\Delta_2 u_{2,L} +iL[X_1,X_2]u_{2,L}=0.$$
The equations and solutions coincide when $L=0$; i.e., $u_{2,0} = \zeta_2$. Similarly, in $\mathbb{G}_n\setminus\{(a,b)\}$, we have when $\tp=2$, $$\psi_2(y_1,y_2)=\big(c^2(y_1-a)^{2n+2}+(n+1)^2(y_2-b)^2\big)^{\displaystyle{-\frac{n}{2n+2}}}$$ solves $$\Delta_2 \psi_2=0.$$
Also, $$f_{2,L} (y_1,y_2)=g(y_1,y_2)^{-\frac{n}{2n+2}(1+L)}h(y_1,y_2)^{-\frac{n}{2n+2}(1-L)}$$ 
solves $$\Delta_2 f_{2,L} +iL[Y_1,Y_2] f_{2,L}=0.$$
Notice that the equations and solutions coincide when $L=0$; i.e., $f_{2,0}=\psi_2.$
\end{obs}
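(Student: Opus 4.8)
The plan is to extract all six assertions directly from Theorems~\ref{Heisenbergplap}--\ref{GrushinD2lap} by specializing parameters and then removing the Dirac mass by localizing away from the singular point; essentially no fresh computation is required. First I would set $\tp=2$ in Theorem~\ref{Heisenbergplap}: this is the non-logarithmic case, $\eta_2=\frac{4-2}{4(1-2)}=-\tfrac12$, so $\zeta_2=u^{-1/2}$, and Theorem~\ref{Heisenbergplap} gives $\Delta_2\zeta_2=C\delta_0$ in the distributional sense. Since $u=(x_1^2+x_2^2)^2+16x_3^2$ vanishes only at the origin, $\zeta_2$ is smooth on $\mathbb{H}^1\setminus\{0\}$; pairing the distributional identity with test functions supported away from the origin annihilates the $\delta_0$ term, and the fundamental lemma of the calculus of variations then yields the pointwise identity $\Delta_2\zeta_2=0$ on $\mathbb{H}^1\setminus\{0\}$. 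The same localization applied to Theorem~\ref{Grushinplap} at $\tp=2$ --- where $\tau_2=\frac{n+2-2}{(2n+2)(1-2)}=-\frac{n}{2n+2}$ and $f$ vanishes only at $(a,b)$ because $c\neq0$ --- gives $\Delta_2\psi_2=0$ on $\mathbb{G}_n\setminus\{(a,b)\}$; and applied to Theorems~\ref{HeisenbergD2lap} and~\ref{GrushinD2lap} (whose standing hypothesis $|L|\neq1$ is already in force) it gives the two drift identities on the corresponding punctured spaces.

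For the coincidences at $L=0$ I would simply substitute. In Theorem~\ref{HeisenbergD2lap}, $L=0$ forces $\eta=\tau=-\tfrac12$, so $u_{2,0}=(vw)^{-1/2}$; and $vw=\big((x_1^2+x_2^2)-4ix_3\big)\big((x_1^2+x_2^2)+4ix_3\big)=(x_1^2+x_2^2)^2+16x_3^2=u$, whence $u_{2,0}=u^{-1/2}=\zeta_2$ and the drift equation collapses to $\Delta_2\zeta_2=0$. Likewise $L=0$ in Theorem~\ref{GrushinD2lap} gives $\alpha=\beta=-\frac{n}{2n+2}$, and $gh=\big(c(y_1-a)^{n+1}\big)^2+\big((n+1)(y_2-b)\big)^2=f$, so $f_{2,0}=(gh)^{-n/(2n+2)}=f^{-n/(2n+2)}=\psi_2$, an exponent which agrees with $\tau_2$ above.

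There is no genuine obstacle here --- the content lives entirely in the cited theorems --- but one point wants a line of care: the meaning of the fractional complex powers $v^\eta$, $w^\tau$, $g^\alpha$, $h^\beta$ on the punctured domains. One fixes the branch consistently with \cite{BGG}, noting for instance that $\operatorname{Re}v=\operatorname{Re}w=x_1^2+x_2^2\ge 0$ with equality only on the $x_3$-axis, so that $v$ and $w$ avoid $(-\infty,0)$ away from the origin; with that convention the smoothness of $u_{2,L}$ and $f_{2,L}$ off the singular point --- hence the localization step used above --- is immediate.
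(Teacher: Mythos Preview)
Your proposal is correct. The paper does not supply a proof of this observation at all --- it is stated as a known fact imported from \cite{B:C}, specializing Theorems~\ref{Heisenbergplap}--\ref{GrushinD2lap} to $\tp=2$ --- so your derivation (specialize the exponents, localize away from the singular point to kill the $\delta_0$, then check $vw=u$ and $gh=f$ at $L=0$) is exactly the intended unpacking, and your remark on branch choice is the only point that genuinely warrants a sentence.
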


\begin{quest}
We wish to extend the preceding relationship in $\mathbb{G}_n\setminus\{(a,b)\}$ and in $\mathbb{H}^1\setminus\{0\}$ for all $\tp,\ 1<\tp < \infty$. In the case of the Grushin-type planes, we wish to find a differential operator $\mathcal{G}_{\tp, L}$ and a function $f_{\tp, L}$ satisfying:
\begin{equation*}
    \mathcal{G}_{\tp, 0} = \Delta_\tp \quad \text{and} \quad \mathcal{G}_{2,L} = \Delta_2 + iL[Y_1, Y_2]
\end{equation*}
with $f_{\tp, 0}$ being the solution of Theorem \ref{Grushinplap} and $f_{2,L}$ being the solution of Theorem \ref{GrushinD2lap} such that:
\begin{equation*}
    \mathcal{G}_{\tp, L} f_{\tp, L}(q) = 0
\end{equation*}
for $q \in \mathbb{G}_n \setminus \lbrace (a,b) \rbrace$, $1 < \tp < \infty$, and $L \in \mathbb{R}$. Similarly, in the case of the Heisenberg group, we wish to find a differential operator $\mathcal{H}_{\tp, L}$ and a function $u_{\tp, L}$ satisfying:
\begin{equation*}
    \mathcal{H}_{\tp, 0} = \Delta_\tp \quad \text{and} \quad \mathcal{H}_{2,L} = \Delta_2 + iL[X_1, X_2]
\end{equation*}
with $u_{\tp, 0}$ being the solution of Theorem \ref{Heisenbergplap} and $u_{2,L}$ being the solution of Theorem \ref{HeisenbergD2lap} such that:
\begin{equation*}
    \mathcal{H}_{\tp, L} u_{\tp, L}(q) = 0
\end{equation*}
for $q \in \mathbb{H}^1 \setminus \lbrace 0 \rbrace$, $1 < \tp < \infty$, and $L \in \mathbb{R}$.

Furthermore, we would like $f_{\tp,L}$ and $u_{\tp,L}$ to be the fundamental solutions to their respective equations. 
\end{quest}

\section{A Generalization in the Heisenberg Group}

For the Heisenberg group $\mathbb{H}^1$, we consider the following parameters:
\begin{eqnarray*}
    \eta = \frac{4-\tp + 2L(1-\tp)}{4(1-\tp)} & \textmd{and} & \tau = \frac{4-\tp - 2L(1-\tp)}{4(1-\tp)}
\end{eqnarray*}
for $L \in \mathbb{R}$ with:
\begin{eqnarray*}
L \ne \pm\frac{4 - \tp}{2(1-\tp)}.
\end{eqnarray*}
We use these parameters with the functions
\begin{eqnarray*}
v(x_1,x_2,x_3) & = & (x_1^2 + x_2^2) - 4i x_3\\ 
w(x_1,x_2,x_3) & = & (x_1^2 + x_2^2) + 4i x_3
\end{eqnarray*}
to define our main function:
\begin{eqnarray}
\label{coreheisenberg}
u_{\tp, L}(y_1, y_2) = v(x_1,x_2,x_3)^\eta w(x_1,x_2,x_3)^\tau.
\end{eqnarray}
Using Equation \ref{coreheisenberg}, we have the following theorem.

\begin{theorem}
On $\mathbb{H}^1$, we have:
\begin{eqnarray*}
\mathcal{H}_{\tp, L} \left( u_{\tp, L} \right) \defeq \Delta_{\tp} u_{\tp, L} + iL \left[ X_1, X_2 \right] \left( \norm{\nabla_0 u_{\tp, L}}^{\tp -2}_\mathbb{H} u_{\tp, L}  \right) = C\delta_0
\end{eqnarray*}
for some constant $C$ in the sense of distributions.
\end{theorem}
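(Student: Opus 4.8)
The plan is to split the claim into a pointwise identity on $\mathbb{H}^1\setminus\{0\}$ and a distributional computation at the origin. Throughout write $z=x_1+ix_2$ and $\rho=x_1^2+x_2^2$, so that $v=\rho-4ix_3$ and $w=\rho+4ix_3$ vanish only at $0$ and $\rho=\tfrac12(v+w)$. A short direct computation gives $X_1v=2z$, $X_2v=-2iz$, $X_1w=2\bar z$, $X_2w=2i\bar z$, $X_3v=-4i$, $X_3w=4i$, and hence the ``null-coordinate'' identities
\begin{equation*}
\norm{\nabla_0 v}_{\mathbb H}^2=\norm{\nabla_0 w}_{\mathbb H}^2=0,\qquad \iph{\nabla_0 v}{\nabla_0 w}=8\rho,\qquad \sum_{i=1}^{2}X_iX_i v=\sum_{i=1}^{2}X_iX_i w=4,
\end{equation*}
where $\iph{\cdot}{\cdot}$ and $\norm{\cdot}_{\mathbb H}$ are extended complex-bilinearly. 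Since $v,w\neq 0$ on $\mathbb{H}^1\setminus\{0\}$, the function $u_{\tp,L}=v^{\eta}w^{\tau}$ is smooth there once branches are fixed locally.

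First I would compute, from the product rule and the identities above,
\begin{equation*}
\norm{\nabla_0 u_{\tp,L}}_{\mathbb H}^2=16\,\eta\tau\,\rho\,v^{2\eta-1}w^{2\tau-1},
\end{equation*}
so that, with $m=\tfrac{\tp-2}{2}$, $A=(\tp-1)\eta-m=-\tfrac{\tp}{4}+\tfrac{L(\tp-1)}{2}$ and $B=(\tp-1)\tau-m=-\tfrac{\tp}{4}-\tfrac{L(\tp-1)}{2}$, one has $\norm{\nabla_0 u_{\tp,L}}_{\mathbb H}^{\tp-2}u_{\tp,L}=(16\eta\tau)^{m}\rho^{m}v^{A}w^{B}$ and $\norm{\nabla_0 u_{\tp,L}}_{\mathbb H}^{\tp-2}\nabla_0 u_{\tp,L}=(16\eta\tau)^{m}\rho^{m}\big(\eta v^{A-1}w^{B}\nabla_0 v+\tau v^{A}w^{B-1}\nabla_0 w\big)$. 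Taking $\div$ via $\div(\phi\,\nabla_0 v)=\iph{\nabla_0\phi}{\nabla_0 v}+\phi\sum_i X_iX_i v$ and again the identities above together with $\rho=\tfrac12(v+w)$, every term either drops out or collapses and $\Delta_{\tp}u_{\tp,L}$ becomes $(16\eta\tau)^{m}\rho^{m}v^{A-1}w^{B-1}(c_v v+c_w w)$ for explicit constants $c_v,c_w$. The drift term needs only one $x_3$-derivative, giving $iL[X_1,X_2]\big(\norm{\nabla_0 u_{\tp,L}}_{\mathbb H}^{\tp-2}u_{\tp,L}\big)=-4L(16\eta\tau)^{m}\rho^{m}v^{A-1}w^{B-1}(Bv-Aw)$. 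Adding the two, $\mathcal{H}_{\tp,L}(u_{\tp,L})$ is a scalar multiple of $(c_v-4LB)v+(c_w+4LA)w$, and a short calculation using $A+B=-\tfrac{\tp}{2}$ and $A-B=L(\tp-1)$ to eliminate $L$ shows that \emph{each} coefficient reduces to a multiple of $(\tp-2)\big(2(A+B)+\tp\big)=0$; equivalently, $\eta$ and $\tau$ are precisely the exponents for which these two linear conditions hold. Hence $\mathcal{H}_{\tp,L}(u_{\tp,L})=0$ on $\mathbb{H}^1\setminus\{0\}$.

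For the distributional statement, note that under the Heisenberg dilations $\delta_{\lambda}(x_1,x_2,x_3)=(\lambda x_1,\lambda x_2,\lambda^2x_3)$ the functions $v,w$ are homogeneous of degree $2$, so $u_{\tp,L}$ is homogeneous of degree $\tfrac{\tp-4}{\tp-1}$ and the displayed formulas make $\norm{\nabla_0 u_{\tp,L}}_{\mathbb H}^{\tp-2}\nabla_0 u_{\tp,L}$ and $\norm{\nabla_0 u_{\tp,L}}_{\mathbb H}^{\tp-2}u_{\tp,L}$ homogeneous of degrees $-3$ and $-2$. Since the homogeneous dimension of $\mathbb{H}^1$ is $4$, both are in $L^{1}_{\mathrm{loc}}(\mathbb{H}^1)$ (homogeneity controls the origin; the degeneration of $\norm{\nabla_0 u_{\tp,L}}_{\mathbb H}$ along the $x_3$-axis is integrable because the singular factor is the power $\rho^{(\tp-2)/2}$ of $\rho=x_1^2+x_2^2$ and $\tp>0$), so for $\varphi\in\ci(\mathbb{H}^1)$ the distribution
\begin{equation*}
\langle\mathcal{H}_{\tp,L}(u_{\tp,L}),\varphi\rangle=-\int_{\mathbb{H}^1}\iph{\norm{\nabla_0 u_{\tp,L}}_{\mathbb H}^{\tp-2}\nabla_0 u_{\tp,L}}{\nabla_0\varphi}\,dx-iL\int_{\mathbb{H}^1}\norm{\nabla_0 u_{\tp,L}}_{\mathbb H}^{\tp-2}u_{\tp,L}\,X_3\varphi\,dx
\end{equation*}
is well defined. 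Writing each integral as $\lim_{\epsilon\to 0}\int_{\mathbb{H}^1\setminus B_\epsilon}$ (dominated convergence, $B_\epsilon$ a gauge ball about $0$) and integrating by parts on $\mathbb{H}^1\setminus B_\epsilon$ — the Heisenberg divergence theorem for the first integral, ordinary integration by parts in $x_3$ for the second — the solid integrals combine into $\int_{\mathbb{H}^1\setminus B_\epsilon}\varphi\,\mathcal{H}_{\tp,L}(u_{\tp,L})\,dx=0$ by the pointwise identity, leaving only a boundary integral over $\partial B_\epsilon$; a change of variables adapted to $\delta_\epsilon$, together with the homogeneity degrees $-3$ and $-2$, shows this boundary integral tends to $\varphi(0)$ times a fixed finite constant $C$. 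Thus $\langle\mathcal{H}_{\tp,L}(u_{\tp,L}),\varphi\rangle=C\varphi(0)$, i.e.\ $\mathcal{H}_{\tp,L}(u_{\tp,L})=C\delta_0$.

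The bulk of the labor is the pointwise computation, but it is mechanical once the null-coordinate identities are in hand; the genuinely delicate point is the integration by parts over $\mathbb{H}^1\setminus B_\epsilon$ when $1<\tp<2$, since then $\norm{\nabla_0 u_{\tp,L}}_{\mathbb H}^{\tp-2}\nabla_0 u_{\tp,L}$, although continuous and vanishing on the $x_3$-axis, fails to be $C^1$ there, so one first excises a thin tube around the axis, applies the divergence theorem on the resulting smooth region, and lets the tube shrink, using the integrability established above. One should also confirm that the constant $C$ obtained this way is independent of the family of balls used to exhaust $\mathbb{H}^1\setminus\{0\}$, which again follows from the homogeneity bookkeeping.
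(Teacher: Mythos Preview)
The pointwise computation is built on the wrong quadratic form. You extend $\iph{\cdot}{\cdot}$ and $\norm{\cdot}_{\mathbb H}$ complex-bilinearly, so that $(X_1v)^2+(X_2v)^2=0$; this is precisely what makes your null-coordinate identities work and produces $\norm{\nabla_0 u_{\tp,L}}_{\mathbb H}^2=16\,\eta\tau\,\rho\,v^{2\eta-1}w^{2\tau-1}$. But the operator $\mathcal H_{\tp,L}$ in the theorem uses the Hermitian square $\norm{\nabla_0 u}_{\mathbb H}^2=\abs{X_1u}^2+\abs{X_2u}^2$: the paper's proof computes $\overline{X_1u}$, $\overline{X_2u}$ for exactly this reason and obtains (Equation~\eqref{Hnormsq})
\[
\norm{\nabla_0 u_{\tp,L}}_{\mathbb H}^2=8(\eta^2+\tau^2)\,(x_1^2+x_2^2)\,(vw)^{\eta+\tau-1},
\]
which is real, as it must be for $\norm{\nabla_0 u_{\tp,L}}_{\mathbb H}^{\tp-2}$ to be defined without ad hoc branch choices. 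Your expression is genuinely complex whenever $L\neq 0$ (then $2\eta-1\neq 2\tau-1$), so from that line onward you are analysing a different operator, not $\Delta_{\tp}$ as defined in Equation~\eqref{heisenbergplap}. Note also that with the Hermitian norm one has $\abs{X_1v}^2+\abs{X_2v}^2=8\rho\neq 0$, so the null-coordinate shortcut does not survive the correction.

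Everything downstream inherits this: the exponents $A,B$, the claimed cancellation of $c_v-4LB$ and $c_w+4LA$, and the $L^1_{\mathrm{loc}}$/homogeneity bookkeeping are all for the bilinear object. Your distributional strategy (excise a gauge ball, integrate by parts, scale) is in principle a legitimate alternative to the paper's route, which instead regularises via $v_\varepsilon=\rho+\varepsilon^2-4ix_3$, $w_\varepsilon=\rho+\varepsilon^2+4ix_3$ and then invokes the density argument of Beals--Gaveau--Greiner; but either approach requires the correct pointwise identity on $\mathbb H^1\setminus\{0\}$ as input. To repair the proposal you must redo the first half with the Hermitian $\norm{\nabla_0 u}_{\mathbb H}^2$; that computation is the one recorded in Equations~\eqref{X1}--\eqref{X2Hnormsq} and does not admit the simplification you rely on.
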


\begin{proof}
Suppressing arguments and subscripts, we compute the following:
\begin{eqnarray}
X_1 u & = & 2 v^{\eta-1} w^{\tau -1} \big( (\eta w + \tau v)x_1 + (\eta w - \tau v) i x_2 \big) \label{X1} \\ \nonumber
\overline{X_1 u} & = & 2 v^{\tau-1} w^{\eta -1} \big( (\eta v + \tau w) x_1 + (\eta v - \tau w) i x_2 \big)\\
X_2 u & = & 2 v^{\eta -1} w^{\tau -1} \big( (\eta w + \tau v) x_2 - (\eta w - \tau v) i x_1 \big) \label{X2} \\ \nonumber
\overline{X_2 u} & = & 2 v^{\tau -1} w^{\eta -1} \big( (\eta v + \tau w) x_2 - (\eta v - \tau w) i x_1 \big)\\
\textmd{and so \ }\norm{\nabla_0 u}^2 & = & 8(\eta^2 + \tau^2) v^{\eta + \tau -1} w^{\eta + \tau -1} (x_1^2 + x_2^2). \label{Hnormsq}
\end{eqnarray}
Using the above we compute
\begin{eqnarray*}
X_1(X_1 u) & = & 2 v^{\eta -2} w^{\tau -2} \Big( 2 \big( (\eta w + \tau v)x_1^2 + (-\eta w - \tau v) i x_1 x_2 \big) \big( (\eta - 1)w + (\tau -1)v \big)\\
&& \mbox{}+ 2i \big( (\eta w + \tau v) x_2^2 + (\eta w - \tau v) i x_2^2 \big) \big(-(\eta-1)w + (\tau -1)v) \big)\\
&& \mbox{} + vw \big( 2(x_1^2 + x_2^2)(\tau + \eta) + (\eta w + \tau v) \big) \Big)\\
\textmd{and \ }X_2(X_2 u) & = & 2 v^{\eta -2} w^{\tau -2} \Big( 2 \big( (\eta w + \tau v)x_2^2 + (-\eta w + \tau v) i x_1 x_2 \big) \big( (\eta - 1)w + (\tau -1)v \big)\\
&&\mbox{} +  2i \big( (\eta w + \tau v) x_1 x_2 + (-\eta w + \tau v) i x_1^2 \big) \big(-(\eta-1)w + (\tau -1)v) \big)\\
&&\mbox{} + vw \big( 2(x_1^2 + x_2^2)(\tau + \eta) + (\eta w + \tau v) \big) \Big).
\end{eqnarray*}
In addition, we have
\begin{eqnarray}
X_1 \norm{\nabla_0 u}^2 & = & 16(\eta^2 + \tau^2) v^{\eta+\tau -2} w^{\eta + \tau -2} \label{X1Hnormsq}\\ \nonumber
&& \mbox{} \times \Big(  vw x_1 + 2(\eta + \tau -1)(x_1^2 + x_2^2)^2 \big( x_1  - 4 x_2 x_3 \big)  \Big)\\
\textmd{and \ }X_2 \norm{\nabla_0 u}^2 & = & 16(\eta^2 + \tau^2) v^{\eta+\tau -2} w^{\eta + \tau -2} \label{X2Hnormsq}\\ \nonumber
&& \mbox{} \times \Big(  vw x_2 + 2(\eta + \tau -1)(x_1^2 + x_2^2)^2 \big( x_2  - 4 x_1 x_3 \big)  \Big)
\end{eqnarray}
so that
\begin{eqnarray*}
\sum_{j=1}^2 X_j \|\nabla_0 u\|^2 (X_j u) &=& 32 (\eta^2 + \tau^2) v^{2 \eta + \tau - 3} w^{\eta + 2 \tau -  3}  \Big((\eta w + \tau v)vw(x_1^2 +x_2^2)\\ 
&& \mbox{} +  2 (\eta + \tau - 1)(x_1^2 + x_2^2)^2\\
&& \mbox{} \times \Big( (\eta w+\tau v)(x_1^2 + x_2^2)^2  -4(\eta w- \tau v)ix_3 \Big) \Big)\\
\textmd{and \ }\|\nabla_0u\|^2\big(X_1X_1u+X_2X_2u\big) &=& 16 (\eta^2 + \tau^2) v^{2 \eta + \tau - 3} w^{\eta + 2 \tau - 3}(x_1^2 + x_2^2)\\
&& \mbox{} \times \Big(2vw(\eta w + \tau v)+ 4vw(\eta + \tau) (x_1^2 + x_2^2)\\
&& \mbox{} +  2 \big((\eta - 1) w + (\tau - 1) v\big)(\eta w + \tau v) (x_1^2 + x_2^2) \\
&&\mbox{}+ 2 \big(-(\eta - 1) w + (\tau - 1) v\big) (\eta w - \tau v) (x_1^2 + x_2^2)\Big).
\end{eqnarray*}
This yields
\begin{eqnarray*}
\Delta_\tp u & = & \norm{\nabla_0 u}^{\tp-4} \left( \frac{(\tp-2)}{2} \sum_{j=1}^2 X_j \|\nabla_0 u\|^2 (X_j u) + \norm{\nabla_0 u}^2 (X_1 X_1 f + X_2 X_2 u) \right)\\
& = & 2L \frac{(4-\tp)^{\tp-2}}{ (1-\tp)^{\tp-2}} \left( 1 + \frac{4L^2 (1-\tp)^2}{(4-\tp)^2} \right)^\frac{\tp-2}{2}   v^{\frac{1}{2} \left( \tp\eta + (\tp-2)\tau - \tp  \right)} w^{\frac{1}{2} \left( (\tp-2)\eta + \tp\tau - \tp  \right)} \\
&& \mbox{} \times (x_1^2 + x_2^2)^\frac{\tp-2}{2} \left( -2L (x_1^2 + x_2^2) + \tp 4i x_3 \right).
\end{eqnarray*}
We then compute
\begin{eqnarray*}
iL [X_1, X_2] \left( \norm{\nabla_0 u}^{\tp-2} u \right) &=& iL \frac{(4-\tp)^{\tp-2}}{ (1-\tp)^{\tp-2}} \left( 1 + \frac{4L^2 (1-\tp)^2}{(4-\tp)^2} \right)^\frac{\tp-2}{2} (x_1^2 + x_2^2)^\frac{\tp-2}{2}\\
&&\mbox{} \times  \dfrac{\partial}{\partial x_3} v^{\frac{1}{2}(\tp-2)(\eta + \tau - 1) + \eta} w^{\frac{1}{2}(\tp-2)(\eta + \tau - 1) + \tau}\\
&= & -2L \frac{(4-\tp)^{\tp-2}}{ (1-\tp)^{\tp-2}} \left( 1 + \frac{4L^2 (1-\tp)^2}{(4-\tp)^2} \right)^\frac{\tp-2}{2}    (x_1^2 + x_2^2)^\frac{\tp-2}{2} \\
&& \mbox{} \times  v^{\frac{1}{2} \left( \tp\eta + (\tp-2)\tau - \tp  \right)} w^{\frac{1}{2} \left( (\tp-2)\eta + \tp\tau - \tp  \right)} \left( -2L (x_1^2 + x_2^2) + \tp 4i x_3 \right)\\
&=& - \Delta_\tp u 
\end{eqnarray*}
from which it follows that $\mathcal{H}_{\tp, L} u_{\tp, L} = 0$ on $\mathbb{H}^1 \setminus \lbrace 0 \rbrace$, away from the singularity. We now consider the normalization:
\begin{eqnarray*}
v_\varepsilon (x_1,x_2,x_3) & \defeq & (x_1^2 + x_2^2) + \varepsilon^2 - 4i x_3\\ 
w_\varepsilon (x_1,x_2,x_3) & \defeq & (x_1^2 + x_2^2) + \varepsilon^2 +  4i x_3
\end{eqnarray*}
so that
\begin{eqnarray*}
u_\varepsilon (x_1, x_2, x_3) & \defeq &  v_\varepsilon (x_1,x_2,x_3)^\eta  w_\varepsilon (x_1,x_2,x_3)^\tau .
\end{eqnarray*}
Suppressing arguments and computing similarly as before yields the distribution:
\begin{eqnarray}\label{heisenbergdist}
\mathcal{H}_{\tp, L} u_\varepsilon & = & 2^\frac{3p-2}{2} \varepsilon^2 \left( \frac{p(4-p)}{4(1-p)}  +  L^2 \right) (\eta^2 + \tau^2)^{\frac{p-2}{2}} (x_1^2 + x_2^2)^{\frac{p-2}{2}}\\ \nonumber
&&\qquad \times v_\varepsilon^{\frac{\eta p + \tau (p-2) - p}{2}} w_\varepsilon^{\frac{\eta (p-2) + \tau p - p}{2}} .
\end{eqnarray}
By the argument of \cite[Theorem 7.5, (c)]{BGG}, the distribution of \eqref{heisenbergdist} is determined by the following density:
\begin{eqnarray}\label{heisenbergdensity}
\qquad \frac{2^\frac{3p-2}{2} \Big( \frac{p(4-p)}{4(1-p)}  +  L^2 \Big) (\eta^2 + \tau^2)^{\frac{p-2}{2}} \Big( \left( \frac{x_1}{\varepsilon} \right)^2 + \left( \frac{x_2}{\varepsilon} \right)^2 \Big)^{\frac{p-2}{2}} dm\Big( \frac{x_1^2 + x_2^2}{\varepsilon^2} \Big) d\Big( \frac{x_3}{\varepsilon^2} \Big) \frac{1}{-2i}}{\left(  \left( \frac{x_1}{\varepsilon} \right)^2 + \left( \frac{x_2}{\varepsilon} \right)^2 + 1 - 4i \frac{x_3}{\varepsilon^2}\right)^{-\frac{\eta p + \tau (p-2) - p}{2}} \left( \left( \frac{x_1}{\varepsilon} \right)^2 + \left( \frac{x_2}{\varepsilon} \right)^2  + 1 + 4i \frac{x_3}{\varepsilon^2}\right)^{-\frac{\eta (p-2) + \tau p - p}{2}}}
\end{eqnarray}
where $dm$ denotes the Lebesgue measure in the complex plane. Then as $\varepsilon \to 0$ the distribution of \eqref{heisenbergdensity} tends to the $\delta_0$ distribution, up to a constant factor.
\end{proof}
Observing that:
\begin{eqnarray*}
    L \ne \pm \frac{4-\tp}{2(1-\tp)} & \textmd{implies} & \tp \ne \abs{\frac{2L + 4}{2L +1}}_,  \abs{\frac{2L - 4}{2L - 1}}
\end{eqnarray*}
we have immediately the following corollary.
\begin{corollary}\label{hsmooth}
Let $\tp > \max \left\lbrace  \abs{\frac{2L + 4}{2L + 1}}_,  \abs{\frac{2L - 4}{2L - 1}}  \right\rbrace$. Then the function $u_{\tp, L}$ of Equation \ref{coreheisenberg} is a smooth solution to the Dirichlet problem
\begin{eqnarray*}
\left\{\begin{array}{cc}
\mathcal{H}_{tp, L} \left( u_{\tp, L}(q) \right) =0 & q \in \mathbb{H}^1\setminus\{0\} \\
0 & q = 0.
\end{array}\right.
\end{eqnarray*}
\end{corollary}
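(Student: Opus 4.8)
\emph{Proof proposal.} I would break the corollary into three separate claims: (a) $\mathcal{H}_{\tp,L}u_{\tp,L}\equiv 0$ on $\mathbb{H}^1\setminus\{0\}$; (b) $u_{\tp,L}$ is of class $C^\infty$ on $\mathbb{H}^1\setminus\{0\}$; and (c) $u_{\tp,L}(q)\to 0$ as $q\to 0$, so that $u_{\tp,L}$ extends continuously to $\mathbb{H}^1$ with value $0$ at the origin. Claim (a) is already available: it is exactly the statement ``$\mathcal{H}_{\tp,L}u_{\tp,L}=0$ on $\mathbb{H}^1\setminus\{0\}$, away from the singularity'' proved inside the preceding theorem, before the regularization $u_\varepsilon$ is introduced, so I would simply invoke that computation. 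The role of the strengthened hypothesis $\tp>\max\{|\frac{2L+4}{2L+1}|,|\frac{2L-4}{2L-1}|\}$ is confined to (b) and (c).

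For (b) I would work directly with the explicit formula $u_{\tp,L}=v^{\eta}w^{\tau}$. The polynomials $v=(x_1^2+x_2^2)-4ix_3$ and $w=(x_1^2+x_2^2)+4ix_3$ are never zero on $\mathbb{H}^1\setminus\{0\}$, since $vw=(x_1^2+x_2^2)^2+16x_3^2>0$ there; and both have real part $x_1^2+x_2^2\ge0$, with that real part vanishing only where the common value is a nonzero purely imaginary number. Hence $v$ and $w$ take values in the slit plane $\mathbb{C}\setminus(-\infty,0]$, on which the principal branch of $z\mapsto z^{s}$ is holomorphic for each real $s$; since $1<\tp<\infty$ keeps $4(1-\tp)$ away from $0$, the exponents $\eta,\tau$ are finite reals, and therefore $v^{\eta}$, $w^{\tau}$, and their product $u_{\tp,L}$ are real-analytic, hence $C^\infty$, on $\mathbb{H}^1\setminus\{0\}$. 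I would also observe, from \eqref{Hnormsq}, that $\|\nabla_0 u_{\tp,L}\|_{\mathbb{H}}^2=8(\eta^2+\tau^2)(vw)^{\eta+\tau-1}(x_1^2+x_2^2)$ (with $\eta^2+\tau^2>0$, since $\eta=\tau=0$ would force $\tp$ to equal both excluded critical values simultaneously), so $\nabla_0 u_{\tp,L}$ vanishes only along the $x_3$-axis; along that axis the closed forms for $\Delta_\tp u_{\tp,L}$ and for $iL[X_1,X_2]\!\left(\|\nabla_0 u_{\tp,L}\|_{\mathbb{H}}^{\tp-2}u_{\tp,L}\right)$ from the proof of the theorem each carry a factor $(x_1^2+x_2^2)^{(\tp-2)/2}$ that extends continuously by $0$ (the hypothesis on $\tp$ being there, in particular, to guarantee $\tp>2$), so identity (a) persists across the axis and $u_{\tp,L}$ is a classical solution on all of $\mathbb{H}^1\setminus\{0\}$.

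For (c) I would compute the modulus. Since $w=\overline v$ pointwise and $\eta,\tau$ are real, $|u_{\tp,L}|=|v|^{\eta}|w|^{\tau}=|v|^{\eta+\tau}$ with $|v|=\bigl((x_1^2+x_2^2)^2+16x_3^2\bigr)^{1/2}$; a short simplification gives $\eta+\tau=\dfrac{4-\tp}{2(1-\tp)}$, so $|u_{\tp,L}(q)|=\bigl((x_1^2+x_2^2)^2+16x_3^2\bigr)^{\frac{4-\tp}{4(1-\tp)}}$. This exponent is strictly positive precisely when $\tp>4$, which is what the bound in the hypothesis delivers (for $L=0$ it reads $\tp>4$ verbatim, and I would verify that $\tp>\max\{|\frac{2L+4}{2L+1}|,|\frac{2L-4}{2L-1}|\}$ forces $\eta+\tau>0$ in general). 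Therefore $|u_{\tp,L}(q)|\to0$ as $q\to0$, giving the boundary value $0$ and completing the solution of the stated Dirichlet problem.

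I expect the main obstacle to be the care needed at the degeneracy set rather than any single computation: one must check that the principal-branch powers $v^{\eta},w^{\tau}$ are globally single-valued on $\mathbb{H}^1\setminus\{0\}$ (which is exactly what $\operatorname{Re}v\ge0$ buys), that the factor $\|\nabla_0 u_{\tp,L}\|_{\mathbb{H}}^{\tp-2}$ degenerating along the $x_3$-axis does not spoil $u_{\tp,L}$ being a classical solution there, and that the inequality on $\tp$ really does yield the sign $\eta+\tau>0$ needed for the boundary value rather than merely excluding the two critical exponents of the theorem. With those branch- and sign-bookkeeping points settled, the corollary follows by combining (a), (b), and (c).
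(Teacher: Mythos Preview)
The paper supplies essentially no proof of this corollary beyond the one-line observation preceding it: that the exclusion $L\ne\pm\frac{4-\tp}{2(1-\tp)}$ is equivalent to $\tp\ne\bigl|\frac{2L+4}{2L+1}\bigr|,\bigl|\frac{2L-4}{2L-1}\bigr|$, after which the corollary is declared ``immediate.'' Your proposal is considerably more thorough than what the paper does: you actually set out to verify smoothness of $u_{\tp,L}$ on $\mathbb{H}^1\setminus\{0\}$ via principal-branch powers of $v,w$ in the slit plane, and to check the boundary value $u_{\tp,L}(0)=0$ through $|u_{\tp,L}|=|v|^{\eta+\tau}$. Claims (a) and (b) in your outline are sound, and your branch-cut argument is the right way to secure single-valuedness.

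There is, however, a genuine gap in step (c), and your deferred ``I would verify'' cannot be completed. The sign condition $\eta+\tau=\frac{4-\tp}{2(1-\tp)}>0$ is equivalent simply to $\tp>4$, and the stated hypothesis does \emph{not} force $\tp>4$. For instance, at $L=1$ the threshold is $\max\bigl\{\bigl|\tfrac{6}{3}\bigr|,\bigl|\tfrac{-2}{1}\bigr|\bigr\}=2$, so $\tp=3$ satisfies the hypothesis; yet $\eta+\tau=\frac{1}{2(1-3)}=-\tfrac14<0$, and hence $|u_{3,1}(q)|\to\infty$ as $q\to0$, not $0$. The same family of examples shows the hypothesis does not even guarantee $\tp>2$ (take $L=2$, where the threshold is $8/5$), so your side remark that the factor $(x_1^2+x_2^2)^{(\tp-2)/2}$ extends continuously by $0$ across the $x_3$-axis is also unsupported in general. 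In short, the verification you postpone cannot succeed under the hypothesis as written; this appears to be a defect in the paper's own corollary rather than in your strategy, but your write-up should flag it explicitly rather than promise a proof that does not exist.
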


\section{A Generalization in the Grushin Plane}

For the Grushin-type planes, we consider the following parameters:
\begin{eqnarray*}
    \alpha = \frac{n+2-\tp - Ln(1- \tp)}{2(n+1)(1-\tp)} & \textmd{and} & \beta = \frac{ n+2-\tp + Ln(1- \tp)}{2(n+1)(1-\tp)}
\end{eqnarray*}
where $L \in \mathbb{R}$ with:
\begin{eqnarray*}
L \ne \pm \frac{n+2-\tp}{n(1-\tp)}.
\end{eqnarray*}
We use these constants with the functions
\begin{eqnarray*}
g(y_1,y_2) & = & c(y_1-a)^{n+1}+i(n+1)(y_2-b)\\ 
h(y_1,y_2) & = & c(y_1-a)^{n+1}-i(n+1)(y_2-b)
\end{eqnarray*}
to define our main function:
\begin{eqnarray}
\label{coregrushin}
f_{\tp, L}(y_1, y_2) = g(y_1, y_2)^\alpha h(y_1, y_2)^\beta.
\end{eqnarray}
Using Equation \ref{coregrushin}, we have the following theorem.

\begin{theorem}
On $\mathbb{G}_n$, we have:
\begin{eqnarray*}
\mathcal{G}_{\tp, L} \left( f_{\tp, L} \right) \defeq \Delta_{\tp} f_{\tp, L} + iL \left[ Y_1, Y_2 \right] \left( \norm{\nabla_0 f_{\tp, L}}^{\tp -2}_\mathbb{G} f_{\tp, L}  \right) = C\delta_0 
\end{eqnarray*}
for some constant $C$ the sense of distributions.
\end{theorem}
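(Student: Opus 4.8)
The plan is to mirror the Heisenberg computation of the previous section, exploiting the fact that the Grushin vector fields $Y_1, Y_2$ act on $g$ and $h$ in an almost identical way to how $X_1, X_2$ act on $v$ and $w$. First I would record the basic derivatives: a direct computation gives $Y_1 g = c(n+1)(y_1-a)^n$, $Y_1 h = c(n+1)(y_1-a)^n$, $Y_2 g = ic(n+1)(y_1-a)^n$, and $Y_2 h = -ic(n+1)(y_1-a)^n$, so that $Y_j$ applied to $f_{\tp,L} = g^\alpha h^\beta$ produces a common factor $c(n+1)(y_1-a)^n\, g^{\alpha-1} h^{\beta-1}$ times a linear combination of $\alpha h \pm \beta g$. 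From here I would compute $\norm{\nabla_0 f}_{\mathbb{G}}^2 = 2c^2(n+1)^2(y_1-a)^{2n}(\alpha^2+\beta^2)\, g^{\alpha+\beta-1} h^{\alpha+\beta-1}$, the analogue of Equation \eqref{Hnormsq}, and then assemble $Y_j\norm{\nabla_0 f}_{\mathbb{G}}^2$, $Y_jY_j f$, and the two sums appearing in the definition \eqref{grushinplap} of $\Delta_\tp$.

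The key algebraic point, just as in the Heisenberg case, is that with the stated choices of $\alpha$ and $\beta$ the exponents collapse: one should find $\Delta_\tp f_{\tp,L}$ equal to an explicit constant (built from $c$, $n$, $\tp$, $L$, and $\alpha^2+\beta^2$) times $(y_1-a)^{n(\tp-2)}\, g^{(1/2)(\tp\alpha+(\tp-2)\beta - \tp)} h^{(1/2)((\tp-2)\alpha + \tp\beta - \tp)}$ times a first-order polynomial in $(y_1-a)^{n+1}$ and $(y_2-b)$. Then I would compute $iL[Y_1,Y_2]\big(\norm{\nabla_0 f}_{\mathbb{G}}^{\tp-2} f\big)$: since $[Y_1,Y_2] = cn(y_1-a)^{n-1}\partial/\partial y_2$ and $\norm{\nabla_0 f}_{\mathbb{G}}^{\tp-2} f = (2c^2(n+1)^2)^{(\tp-2)/2}(\alpha^2+\beta^2)^{(\tp-2)/2}(y_1-a)^{n(\tp-2)}\, g^{(\tp-2)(\alpha+\beta-1)/2 + \alpha} h^{(\tp-2)(\alpha+\beta-1)/2 + \beta}$, differentiating in $y_2$ brings down exactly the factor needed to match $-\Delta_\tp f_{\tp,L}$. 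The condition $L \ne \pm (n+2-\tp)/(n(1-\tp))$ guarantees $\alpha, \beta \ne 0$ so the exponents are in range and the whole manipulation is legitimate on $\mathbb{G}_n \setminus \{(a,b)\}$, giving $\mathcal{G}_{\tp,L} f_{\tp,L} = 0$ away from the singularity.

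For the distributional statement at $(a,b)$ I would introduce the regularization $g_\varepsilon = c(y_1-a)^{n+1} + i(n+1)(y_2-b) + \varepsilon^2$ and $h_\varepsilon = c(y_1-a)^{n+1} - i(n+1)(y_2-b) + \varepsilon^2$ (or the analogous shift, matching the Heisenberg $v_\varepsilon, w_\varepsilon$), set $f_\varepsilon = g_\varepsilon^\alpha h_\varepsilon^\beta$, and recompute: the $\varepsilon$-terms no longer cancel, leaving $\mathcal{G}_{\tp,L} f_\varepsilon$ equal to $\varepsilon^2$ times an explicit positive constant (involving $\tfrac{\tp(n+2-\tp)}{2(n+1)(1-\tp)} + \tfrac{n}{?}L^2$ or similar) times $(y_1-a)^{n(\tp-2)}\, g_\varepsilon^{\ast} h_\varepsilon^{\ast}$, the direct analogue of \eqref{heisenbergdist}. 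Then, by the rescaling argument of \cite[Theorem 7.5]{BGG} — substituting $(y_1-a) \mapsto \varepsilon^{1/(n+1)} y_1$, $(y_2-b) \mapsto \varepsilon^2 y_2$ so that the density becomes a fixed profile independent of $\varepsilon$ — the distributions converge to $C\delta_{(a,b)}$ as $\varepsilon \to 0$.

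The main obstacle I anticipate is purely the bookkeeping: correctly tracking the exponents of $g$ and $h$ (and the powers of $(y_1-a)$) through $\Delta_\tp$ so that the cancellation with the drift term is exact, and then identifying the precise change of variables under which the regularized density \eqref{heisenbergdensity}-analogue is scale-invariant — the nonisotropic dilation weights $1/(n+1)$ and $2$ must be chosen so that the Jacobian, the $(y_1-a)^{n(\tp-2)}$ factor, and the homogeneity of $g_\varepsilon, h_\varepsilon$ all balance. No genuinely new idea beyond the Heisenberg argument should be needed, since $\mathbb{G}_n$ is modeled on the same complex structure $g, h$ playing the role of $v, w$.
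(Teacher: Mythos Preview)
Your proposal is correct and follows essentially the same route as the paper: compute $Y_j f$, $\|\nabla_0 f\|_{\mathbb{G}}^2$, assemble $\Delta_\tp f$ via \eqref{grushinplap}, verify that the drift term $iL[Y_1,Y_2]\big(\|\nabla_0 f\|_{\mathbb{G}}^{\tp-2} f\big)$ cancels it exactly on $\mathbb{G}_n\setminus\{(a,b)\}$, then regularize by $g_\varepsilon = g + \varepsilon^2$, $h_\varepsilon = h + \varepsilon^2$ and invoke the rescaling argument of \cite[Theorem~7.5(c)]{BGG}. The only discrepancies are bookkeeping --- the paper's $(y_1-a)$-exponent in $\Delta_\tp f$ is $n(\tp-1)-1$ rather than $n(\tp-2)$, and the correct dilation weight on $y_1-a$ is $\varepsilon^{2/(n+1)}$ rather than $\varepsilon^{1/(n+1)}$ --- precisely the sort of detail you already flagged as the main obstacle.
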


\begin{proof}
Suppressing arguments and subscripts, we compute the following:
\begin{eqnarray}
Y_1 f & = & c(n+1)(y_1 - a)^n g^{\alpha -1} h^{\beta -1} (\alpha h + \beta g)\\ \label{Y1}  \nonumber
\overline{Y_1 f} & = & c(n+1)(y_1 - a)^n g^{\beta -1} h^{\alpha -1} (\alpha g + \beta h)\\ 
Y_2 f & = & ic(n+1) (y_1 - a)^n g^{\alpha -1} h^{\beta -1} (\alpha h - \beta g)\\ \label{Y2} \nonumber
\overline{Y_2 f} & = & ic(n+1) (y_1 - a)^n g^{\beta -1} h^{\alpha -1} (\alpha g - \beta h)\\ 
\textmd{and so \ }\norm{\nabla_0 f}^2 & = & 2c^2 (n+1)^2 (y_1 - a)^{2n} g^{\alpha + \beta -1} h^{\alpha + \beta -1} (\alpha^2 + \beta^2). \label{Gnormsq}
\end{eqnarray}
Using the above we compute:
\begin{eqnarray}
\nonumber Y_1 (Y_1 f) & = & c(n+1) (y_1 - a)^{n-1} g^{\alpha - 2} h^{\beta -2}  \\
&& \nonumber \mbox{} \times \Big( ngh(\alpha h + \beta g) + c(n+1)(y_1 - a)^{n+1}  \\
&& \nonumber \mbox{} \times  \big( (\alpha h + \beta g)\left( (\alpha - 1)h + (\beta - 1)g \right) + gh(\alpha + \beta) \big) \Big)\\
\nonumber Y_2 (Y_2 f) & = & -c^2 (n+1)^2 (y_1 - a)^{2n} g^{\alpha -2} h^{\beta -2}  \times\\ \nonumber
& & \big( (\alpha h - \beta g) \left( (\alpha - 1)h - (\beta - 1)g \right) - gh(\alpha + \beta) \big) \\
Y_1 \norm{\nabla_0 f}^2 & = & 4c^2 (n+1)^2 (\alpha^2 + \beta^2) (y_1 - a)^{2n-1} g^{\alpha + \beta -2} h^{\alpha + \beta -2} \label{Y1Gnormsq}\\ \nonumber
& & \mbox{} \times \big( ngh + c^2 (n+1) (\alpha + \beta -1)(y_1 - a)^{2n+2} \big)\\
Y_2 \norm{\nabla_0 f}^2 & = & 4c^3 (n+1)^4 (\alpha^2 + \beta^2) (y_1 - a)^{3n} (y_2 - b) \label{Y2Gnormsq}\\
&&\nonumber  \times (\alpha + \beta -1) g^{\alpha + \beta -2} h^{\alpha + \beta -2}
\end{eqnarray}
and
\begin{eqnarray*}
\sum_{i=1}^2 Y_i\|\nabla_0 f\|^2 (Y_i f) & = & 
4c^3(n+1)^3(\alpha^2+\beta^2)(y_1-a)^{3n-1}g^{2\alpha+\beta-3}h^{\alpha+2\beta-3}\\
&&\mbox{}  \times\Big((\alpha h+\beta g)\big(ngh+c^2(n+1)(\alpha+\beta-1)(y_1-a)^{2n+2}\big)\\
&&\mbox{} + ic(n+1)^2(y_1-a)^{n+1}(y_2-b)(\alpha+\beta-1)(\alpha h -\beta g)\Big)\\
\|\nabla_0f\|^2(Y_1Y_1f+Y_2Y_2f) & = & 2c^3(n+1)^3(\alpha^2+\beta^2)(y_1-a)^{3n-1}g^{2\alpha+\beta-3}h^{\alpha+2\beta-3}\\
&&\mbox{}\times\Big(ngh(\alpha h+\beta g)+4c(n+1)(y_1-a)^{n+1}gh(\alpha\beta)\Big)
\end{eqnarray*}
so that
\begin{eqnarray*}
\Delta_\tp f &=& \norm{\nabla_0 f}^{\tp-4} \left( \frac{(\tp-2)}{2} \sum_{j=1}^2 Y_j\|\nabla_0 f\|^2 (Y_j f) + \norm{\nabla_0 f}^2 (Y_1 Y_1 f + Y_2 Y_2 f) \right)\\
& = & -L 2^{\frac{\tp-2}{2}} c^{\tp-1} n^2 (n+1)^{\tp-2} (y_1 - a)^{n(\tp-1)-1} (\alpha^2 + \beta^2)^{\frac{\tp-2}{2}}\\
&& \mbox{} \times g^{\frac{1}{2}\left( \alpha \tp + \beta(\tp-2) - \tp  \right)} h^{\frac{1}{2}\left( \alpha (\tp-2) + \beta \tp - \tp  \right)} \left( L c(y_1-a)^{n+1} + i(1-\tp)(n+1)(y_2 - b) \right).
\end{eqnarray*}
We then compute:
\begin{eqnarray*}
iL[Y_1, Y_2] \left( \norm{\nabla_0 f}^{\tp -2} f \right) & = & iL 2^{\frac{\tp-2}{2}} c^{\tp-1} n (n+1)^{\tp-2} (y_1 - a)^{n(\tp-1)-1} (\alpha^2 + \beta^2)^{\frac{\tp-2}{2}}  \\
& & \mbox{}  \times \dfrac{\partial}{\partial y_2} \left(  g^{\frac{1}{2} (\alpha \tp + \beta(\tp-2) - (\tp-2))} h^{\frac{1}{2}(\alpha(\tp-2) + \beta \tp - (\tp-2))} \right)\\
& = & L 2^{\frac{\tp-2}{2}} c^{\tp-1} n^2 (n+1)^{\tp-2} (y_1 - a)^{n(\tp-1)-1} (\alpha^2 + \beta^2)^{\frac{\tp-2}{2}}\\
&& \mbox{} \times g^{\frac{1}{2}(\alpha \tp + \beta(\tp-2) - \tp)} h^{\frac{1}{2}(\alpha(\tp-2) + \beta \tp - \tp)}\\
&& \mbox{} \times  \left( Lc(y_1-a)^{n+1}  + i (1-\tp)(n+1)(y_2 - b) \right)\\
&= & - \Delta_\tp f
\end{eqnarray*}
from which it follows that $\mathcal{G}_{\tp, L} f_{\tp, L} = 0$ on $\mathbb{G}_n \setminus \lbrace (a,b) \rbrace$, away from the singularity. We now consider the normalization:
\begin{eqnarray*}
g_\varepsilon (y_1, y_2) & \defeq & c(y_1 - a)^n + \varepsilon^2 + i(n+1)(y_2-b)\\
h_\varepsilon (y_1, y_2) & \defeq & c(y_1 - a)^n + \varepsilon^2 - i(n+1)(y_2-b)
\end{eqnarray*}
so that:
\begin{eqnarray*}
f_\varepsilon (y_1, y_2) & \defeq & g_\varepsilon (y_1, y_2)^\alpha h_\varepsilon (y_1, y_2)^\beta .
\end{eqnarray*}
Suppressing arguments and computing similarly as before yields the distribution:
\begin{eqnarray} \label{grushindist}
\mathcal{G}_{\tp, L} f_\varepsilon &=& -2^{\frac{p-2}{2}} \varepsilon^2 \left( (n + 2 - p) - nL^2 \right)   c^{p-1} n (n+1)^{p-2}  \left( \alpha^2  + \beta^2 \right)^{\frac{p-2}{2}}\\ \nonumber
&&\qquad \times (y_1 - a)^{n(p-1)-1} g^{\frac{\alpha p + \beta(p-2) - p}{2}} h^{\frac{\alpha(p-2) + \beta p - p}{2}}.
\end{eqnarray}
By the argument of \cite[Theorem 7.5, (c)]{BGG}, the distribution of \eqref{grushindist} is determined by the following density:
\begin{eqnarray}\label{grushindensity}
&& -2^{\frac{p-2}{2}}  \big( (n + 2 - p) - nL^2 \big)   c^{p-1} n (n+1)^{p-2}\left( \alpha^2  + \beta^2 \right)^{\frac{p-2}{2}}\\ \nonumber
&&\qquad \times \left( \frac{y_1 - a}{\varepsilon^{2/(n+1)}} \right)^{n(p-1)-1} dm\left( \frac{y_1 - a}{\varepsilon^{2/(n+1)}} \right) d\left( \frac{y_2 - b}{\varepsilon^2} \right) \left( \frac{1}{-2i} \right)\\ \nonumber
&&\qquad \times \left( c \left(\frac{y_1 - a}{\varepsilon^{2/(n+1)}} \right)^{n+1} + 1 + i(n+1) \frac{(y_2 -b)}{\varepsilon^2} \right)^{\frac{\alpha p + \beta(p-2) - p}{2}}\\ \nonumber
&&\qquad \times \left( c \left(\frac{y_1 - a}{\varepsilon^{2/(n+1)}} \right)^{n+1} + 1 - i(n+1) \frac{(y_2 -b)}{\varepsilon^2} \right)^{\frac{\alpha(p-2) + \beta p - p}{2}}
\end{eqnarray}
where $dm$ denotes the Lebesgue measure in the complex plane. Then as $\varepsilon \to 0$ the distribution of \eqref{grushindensity} tends to the $\delta_0$ distribution, up to a constant factor.
\end{proof}
Observing that
\begin{eqnarray*}
    L \ne \pm \frac{n(\tp -1)}{n+2 - \tp} & \textmd{implies} & \tp \ne \abs{\frac{L(n+2) + n}{n+ L}}_,  \abs{\frac{L(n+2) - n}{n- L}}
\end{eqnarray*}
we have immediately the following corollary.
\begin{corollary}\label{gsmooth}
Let $\tp > \max\left\lbrace \abs{\frac{L(n+2) + n}{n+ L}}_,  \abs{\frac{L(n+2) - n}{n- L}} \right\rbrace $. Then the function $f_{\tp, L}$ of Equation \ref{coregrushin} is a smooth solution to the Dirichlet problem
\begin{eqnarray*}
\left\{\begin{array}{cc}
\mathcal{G}_{\tp, L} \left( f_{\tp, L}(q) \right) =0 & q \in \mathbb{G}_n\setminus\{(a,b)\} \\
0 & q = (a,b).
\end{array}\right.
\end{eqnarray*}
\end{corollary}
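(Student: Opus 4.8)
The plan is to extract this as a bookkeeping corollary of the theorem just proved, since the substantive work is already done. That proof established, \emph{before} the regularization $f_\varepsilon$ was introduced, the pointwise identity $\mathcal{G}_{\tp, L} f_{\tp, L} = 0$ on $\mathbb{G}_n \setminus \{(a,b)\}$; this is exactly the equation demanded in the Dirichlet problem, so nothing new is needed for the PDE. For smoothness, observe that $g$ and $h$ are polynomials whose real and imaginary parts cannot vanish together except at $(a,b)$, so $g, h \neq 0$ on $\mathbb{G}_n \setminus \{(a,b)\}$; on any simply connected subdomain a fixed branch of $g^{\alpha}$ and of $h^{\beta}$ is real-analytic, hence so is $f_{\tp,L} = g^{\alpha} h^{\beta}$, and this is all that the (local) statement $\mathcal{G}_{\tp,L} f_{\tp,L} = 0$ requires.

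What is left is the behaviour at the singular point, which I would settle by computing the modulus. Since $\alpha, \beta \in \mathbb{R}$ and $|g| = |h| = \bigl(c^{2}(y_1-a)^{2n+2} + (n+1)^{2}(y_2-b)^{2}\bigr)^{1/2}$, one gets
\[
|f_{\tp,L}| = |g|^{\alpha+\beta} = \bigl(c^{2}(y_1-a)^{2n+2} + (n+1)^{2}(y_2-b)^{2}\bigr)^{\frac{n+2-\tp}{2(n+1)(1-\tp)}},
\]
i.e.\ $|f_{\tp,L}|$ equals the $\tp$-Laplace fundamental solution $\psi_{\tp}$ of Theorem \ref{Grushinplap}. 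The base is strictly positive off $(a,b)$ and vanishes there, so $f_{\tp,L}(q) \to 0$ as $q \to (a,b)$ exactly when the exponent $\tfrac{n+2-\tp}{2(n+1)(1-\tp)}$ is positive. The role of the hypothesis $\tp > \max\bigl\{\,\bigl|\tfrac{L(n+2)+n}{n+L}\bigr|,\ \bigl|\tfrac{L(n+2)-n}{n-L}\bigr|\,\bigr\}$ --- which, as recorded just before the corollary, is the reformulation in terms of $\tp$ of the theorem's restriction on $L$, sharpened to a one-sided bound --- is precisely to force this positivity; with it in force, $f_{\tp,L}$ extends continuously across $(a,b)$ by the value $0$, and the corollary is proved. (The Heisenberg Corollary \ref{hsmooth} goes the same way, via $|u_{\tp,L}| = |v|^{\eta+\tau}$ and the gauge $\bigl((x_1^2+x_2^2)^2+16x_3^2\bigr)^{1/2}$.)

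The one place calling for care is this last step: turning the bound on $\tp$ into the right sign of $\tfrac{n+2-\tp}{2(n+1)(1-\tp)}$ --- and confirming that this positivity, rather than, say, membership of $f_{\tp,L}$ in $W^{1,\tp}_{\mathrm{loc}}$ near $(a,b)$, is the condition the statement actually wants --- which comes down to tracking the signs of $1-\tp$, of $n+2-\tp$, and of the denominators $n \pm L$ appearing in the two thresholds. This is purely arithmetic, so I expect the proof to amount to a couple of lines plus this sign discussion, matching the paper's ``immediately''.
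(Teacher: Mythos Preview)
Your plan for the PDE and the smoothness away from $(a,b)$ is fine and matches the paper, which offers no argument beyond ``immediately'' after translating the $L$-restriction into a $\tp$-restriction. The substantive content you add is the attempt to pin down the boundary value $0$ via $|f_{\tp,L}| = |g|^{\alpha+\beta}$, and here there is a genuine gap.

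Your key assertion is that the hypothesis
\[
\tp > \max\left\{ \left|\frac{L(n+2)+n}{n+L}\right|,\ \left|\frac{L(n+2)-n}{n-L}\right| \right\}
\]
forces $\alpha+\beta = \dfrac{n+2-\tp}{(n+1)(1-\tp)} > 0$. This is false. Take $L=0$: both thresholds collapse to $|n/n| = 1$, so the hypothesis reads $\tp > 1$; but for $\tp > 1$ the exponent $\alpha+\beta$ is positive only when $\tp > n+2$. Hence for any $1 < \tp < n+2$ with $L=0$ the hypothesis holds yet $|f_{\tp,0}| \to \infty$ as $q \to (a,b)$. The same failure occurs in the Heisenberg case: with $L=1$ the thresholds are both $2$, so $\tp = 3$ is admissible, but $\eta+\tau = \tfrac{4-3}{2(1-3)} = -\tfrac14 < 0$ and $|u_{3,1}|$ blows up at the origin. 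So the ``purely arithmetic'' step you deferred does not in fact go through with the thresholds as printed.

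This suggests either that the thresholds in the statement are miscomputed (solving $\alpha=0$ and $\beta=0$ for $\tp$ actually gives $\tp = \tfrac{n+2 \mp Ln}{1 \mp Ln}$, not the printed expressions; note also the mismatch between the restriction $L \ne \pm\tfrac{n+2-\tp}{n(1-\tp)}$ in the theorem and the reciprocal form $L \ne \pm\tfrac{n(\tp-1)}{n+2-\tp}$ quoted before the corollary), or that the ``$0$ at $q=(a,b)$'' line in the Dirichlet problem is not meant to assert $f_{\tp,L}(a,b)=0$ at all. The paper does not resolve this --- its proof is literally the word ``immediately'' --- so you cannot complete the argument along the lines you propose without either correcting the thresholds or reinterpreting the boundary clause.
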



\section{The Limit as $\tp \to \infty$}

\subsection{Heisenberg Group}

Recall that the drift $\tp$-Laplace equation in the Heisenberg group $\mathbb{H}^1$ is given by:
\begin{equation*}
\mathcal{H}_{\tp,L}(u) \defeq \Delta_\tp u + iL [X_1, X_2] \left( \norm{\nabla_0 u}_\mathbb{H}^{\tp-2} u \right) = 0.
\end{equation*}
A routine expansion of the drift term yields the observation:
\begin{eqnarray*}
\mathcal{H}_{\tp,L}(u) &=& \Delta_\tp u + iL\left( \frac{\tp - 2}{2} \norm{\nabla_0 u}_\mathbb{H}^{\tp-4} \left( \dfrac{\partial}{\partial x_3} \norm{\nabla_0 u}_\mathbb{H}^{2} \right)u + \norm{\nabla_0 u}_\mathbb{H}^{\tp-2} \dfrac{\partial}{\partial x_3}u \right)\\
&=& 0.
\end{eqnarray*}
Dividing through by $\frac{\tp - 2}{2} \norm{\nabla_0 u}_\mathbb{H}^{\tp-4}$ and formally taking the limit $\tp \to \infty$, we obtain:
\begin{equation*}
\mathcal{H}_{\infty, L}(u) = \Delta_\infty u + iL [X_1, X_2] \left( \norm{\nabla_0 u}_\mathbb{H}^{2} \right) u.
\end{equation*}
Considering Equation \ref{coreheisenberg} and formally letting $\tp \to \infty$ yields:
\begin{equation*}
u_{\infty, L}(x_1, x_2, x_3) = v(x_1, x_2, x_3)^{\frac{1+2L}{4}} w(x_1, x_2, x_3)^{\frac{1-2L}{4}} 
\end{equation*}
where we recall the functions $v(x_1, x_2, x_3)$ and $w(x_1, x_2, x_3)$ are given by:
\begin{eqnarray*}
v(x_1, x_2, x_3) &=& \left( x_1^2 + x_2^2 \right) - 4ix_3\\
w(x_1, x_2, x_3) &=& \left( x_1^2 + x_2^2 \right) + 4ix_3.
\end{eqnarray*}
We have the following theorem.
\begin{theorem}
The function $u_{\infty,L}$, as above, is a smooth solution to the Dirichlet problem
\begin{eqnarray*}
\left\{\begin{array}{cc}
\mathcal{H}_{\infty,L} u_{\infty,L}(q)  =0 & q \in \mathbb{H}^1 \setminus\{0\} \\
0 & q = 0.
\end{array}\right.
\end{eqnarray*}
\end{theorem}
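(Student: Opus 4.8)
The plan is to treat this as the $\tp\to\infty$ specialization of the first theorem of Section~4 and to verify the three things the statement asks for: that $u_{\infty,L}$ is smooth on $\mathbb{H}^1\setminus\{0\}$, that it satisfies $\mathcal{H}_{\infty,L}u_{\infty,L}=0$ there, and that it extends continuously to the origin with value $0$. First note that the limiting exponents are $\eta_\infty=\tfrac{1+2L}{4}$ and $\tau_\infty=\tfrac{1-2L}{4}$, so that $\eta_\infty+\tau_\infty=\tfrac12$ while $\eta_\infty-\tau_\infty=L$ (in fact $\eta-\tau=L$ already holds for every finite $\tp$, and only the sum $\eta+\tau=\tfrac{4-\tp}{2(1-\tp)}$ moves with $\tp$, tending to $\tfrac12$). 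Since the functions $v$ and $w$ are smooth and nowhere vanishing on $\mathbb{H}^1\setminus\{0\}$ and $\mathbb{R}^3\setminus\{0\}$ is simply connected, a single-valued smooth branch of $v^{\eta_\infty}w^{\tau_\infty}$ is available, so $u_{\infty,L}$ is smooth away from the origin; moreover the quantities $\norm{\nabla_0 u_{\infty,L}}_\mathbb{H}^2$, $X_j\norm{\nabla_0 u_{\infty,L}}_\mathbb{H}^2$, and the rest are smooth functions on all of $\mathbb{H}^1\setminus\{0\}$, including where $\nabla_0 u_{\infty,L}$ vanishes, so the identity to be proved is an identity between smooth functions.

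For the equation itself, the verification runs exactly parallel to the proof of the first theorem of Section~4, with $\tp$ replaced by $\infty$. Recall that, by the derivation preceding the statement, $\Delta_\infty u=\sum_{j=1}^2 X_j\!\big(\norm{\nabla_0 u}_\mathbb{H}^2\big)X_j u$ and the drift term is $iL[X_1,X_2]\!\big(\norm{\nabla_0 u}_\mathbb{H}^2\big)u$. All the needed ingredients were already computed in Section~4: the first-order derivatives $X_j u_{\infty,L}$ are given by \eqref{X1} and \eqref{X2}, the norm by \eqref{Hnormsq} (which for $\eta_\infty+\tau_\infty-1=-\tfrac12$ reads $\norm{\nabla_0 u_{\infty,L}}_\mathbb{H}^2=8(\eta_\infty^2+\tau_\infty^2)(vw)^{-1/2}(x_1^2+x_2^2)$), and the derivatives $X_j\norm{\nabla_0 u_{\infty,L}}_\mathbb{H}^2$ by \eqref{X1Hnormsq} and \eqref{X2Hnormsq}. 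Specializing these to $\eta=\eta_\infty$, $\tau=\tau_\infty$ and assembling $\sum_j X_j\norm{\nabla_0 u_{\infty,L}}_\mathbb{H}^2(X_j u_{\infty,L})$ produces a closed form for $\Delta_\infty u_{\infty,L}$. On the drift side, $[X_1,X_2]=X_3=\partial/\partial x_3$ with $\partial v/\partial x_3=-4i$ and $\partial w/\partial x_3=4i$, so one differentiates $\norm{\nabla_0 u_{\infty,L}}_\mathbb{H}^2$ in $x_3$ and multiplies by $u_{\infty,L}$. The key step is then to check that the two resulting expressions are negatives of one another, i.e.\ that $\Delta_\infty u_{\infty,L}=-\,iL[X_1,X_2]\!\big(\norm{\nabla_0 u_{\infty,L}}_\mathbb{H}^2\big)u_{\infty,L}$; this is the $\tp\to\infty$ analogue of the identity $iL[X_1,X_2]\!\big(\norm{\nabla_0 u_{\tp,L}}_\mathbb{H}^{\tp-2}u_{\tp,L}\big)=-\Delta_\tp u_{\tp,L}$ established in that proof, and it is exactly where I expect the main work to lie: although no new derivatives are needed, the limiting normalization changes the powers of $v$ and $w$ that appear, so the cancellation must be rechecked by a careful, if routine, computation. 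One could instead try to pass to the limit directly in the finite-$\tp$ identity after dividing by $\tfrac{\tp-2}{2}\norm{\nabla_0 u_{\tp,L}}_\mathbb{H}^{\tp-4}$, using $u_{\tp,L}\to u_{\infty,L}$ locally uniformly on $\mathbb{H}^1\setminus\{0\}$, but the direct route above is cleaner.

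Finally, for the boundary value, since $\eta_\infty$ and $\tau_\infty$ are real and $|v|=|w|=\big((x_1^2+x_2^2)^2+16x_3^2\big)^{1/2}$, we have $|u_{\infty,L}|=|v|^{\eta_\infty}|w|^{\tau_\infty}=\big((x_1^2+x_2^2)^2+16x_3^2\big)^{(\eta_\infty+\tau_\infty)/2}=\big((x_1^2+x_2^2)^2+16x_3^2\big)^{1/4}$, which tends to $0$ as $(x_1,x_2,x_3)\to 0$, regardless of the individual signs of $\eta_\infty$ and $\tau_\infty$. Hence $u_{\infty,L}$ extends continuously to all of $\mathbb{H}^1$ with $u_{\infty,L}(0)=0$, which is the asserted boundary condition; together with the two previous paragraphs this shows $u_{\infty,L}$ is a smooth solution of the stated Dirichlet problem. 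No regularization or distributional argument is required here, since the claim is only that $u_{\infty,L}$ solves the equation on the punctured group and extends by $0$ at the origin, not that it is a fundamental solution of $\mathcal{H}_{\infty,L}$.
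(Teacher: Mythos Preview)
Your proposal is correct and follows essentially the same route as the paper: specialize the first-order data \eqref{X1}, \eqref{X2}, \eqref{Hnormsq}, \eqref{X1Hnormsq}, \eqref{X2Hnormsq} to the limiting exponents $\eta_\infty=\tfrac{1+2L}{4}$, $\tau_\infty=\tfrac{1-2L}{4}$, form $\Delta_\infty u_{\infty,L}=\sum_j X_j\|\nabla_0 u_{\infty,L}\|^2\,X_ju_{\infty,L}$ and the drift term $iL\,\partial_{x_3}\|\nabla_0 u_{\infty,L}\|^2\,u_{\infty,L}$, and verify they cancel (the paper simply displays the common value $128iL(\eta_\infty^2+\tau_\infty^2)(x_1^2+x_2^2)x_3\,v^{2\eta_\infty+\tau_\infty-2}w^{\eta_\infty+2\tau_\infty-2}$, while also noting, as you do, that one may alternatively pass to the limit and invoke Corollary~\ref{hsmooth}). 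Your explicit treatment of the branch choice and of the boundary value via $|u_{\infty,L}|=\big((x_1^2+x_2^2)^2+16x_3^2\big)^{1/4}\to 0$ is a welcome addition that the paper leaves implicit.
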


\begin{proof}
We may prove this theorem by letting $\tp\to\infty$ in Equations \eqref{X1}, \eqref{X2}, \eqref{X1Hnormsq}, \eqref{X2Hnormsq} and invoking continuity (cf. Corollary \ref{hsmooth}). However, for completeness we compute formally. We let:
\begin{eqnarray*}
N = \frac{1+2L}{4} &\textmd{and} & T = \frac{1-2L}{4}
\end{eqnarray*}
and, suppressing arguments and subscripts, compute:
\begin{eqnarray*}
X_1 u & = & 2 v^{N-1} w^{T -1} \big( (N w + T v)x_1 + (N w - T v) i x_2 \big) \\ 
X_2 u & = & 2 v^{N -1} w^{T -1} \big( (N w + T v) x_2 - (N w - T v) i x_1 \big) \\
\norm{\nabla_0 u}^2 & = & 8(N^2 + T^2) v^{N + T -1} w^{N + T -1} (x_1^2 + x_2^2)\\
X_1 \norm{\nabla_0 u}^2 & = & 16(N^2 + T^2) v^{N+T -2} w^{N + T -2}\\
&&\mbox{} \times \Big(  vw x_1 + 2(N + T -1)(x_1^2 + x_2^2)^2 \big( x_1  - 4 x_2 x_3 \big)  \Big)\\
\textmd{and\ }X_2 \norm{\nabla_0 u}^2 & = & 16(N^2 + T^2) v^{N+T -2} w^{N + T -2}\\ 
&&\mbox{} \times\Big(  vw x_2 + 2(N + T -1)(x_1^2 + x_2^2)^2 \big( x_2  - 4 x_1 x_3 \big)  \Big)
\end{eqnarray*}
so that
\begin{eqnarray*}
\Delta_\infty u &=& X_1 \norm{\nabla_0 u}^2 X_1 u + X_2 \norm{\nabla_0 u}^2 X_2 u\\
&=&   32 (N^2 + T^2) v^{2 N + T - 3} w^{N + 2 T -  3}  \Big((N w + T v)vw(x_1^2 +x_2^2)\\ 
&& \mbox{} +  2 (N + T - 1)(x_1^2 + x_2^2)^2\\
&& \mbox{} \times \Big( (N w+T v)(x_1^2 + x_2^2)^2  - 4(N w- T v)ix_3 \Big) \Big)\\
&=& 128 i L(N^2 + T^2) (x_1^2 + x_2^2 ) x_3 v^{2N + T - 2} w^{N + 2T - 2}.
\end{eqnarray*}
We also compute:
\begin{eqnarray*}
iL [X_1, X_2] \left( \norm{\nabla_0 u}^2 \right) u &=& iL v^N w^T \dfrac{\partial}{\partial x_3} \norm{\nabla_0 f}^2\\
&=& -128 i L(N^2 + T^2) (x_1^2 + x_2^2 ) x_3 v^{2N + T - 2} w^{N + 2T - 2}.
\end{eqnarray*}
The theorem follows.
\end{proof}
We notice that when $L=0$, this result was a part of the Ph.D. thesis of the first author \cite{BT}. 
In particular, combined with \cite{BT, B:HG}, we have shown the following diagram commutes in $\mathbb{H}^1\setminus\{0\}$:
$$\begin{CD}
\mathcal{H}_{\tp,L} \left( u_{\tp,L} \right)=0 @>>{\tp\to\infty}>\mathcal{H}_{\infty,L} \left(u_{\infty,L} \right)=0 \\
@VV{L\to 0}V                @VV{L\to 0}V \\
\Delta_{\tp}u_{\tp,0}=0 @>>{\tp\to\infty}> \Delta_{\infty}u_{\infty,0}=0
\end{CD}$$

\subsection{Grushin-type Planes}

Recall that the drift $\tp$-Laplace equation in the Grushin-type planes $\mathbb{G}_n$ is given by:
\begin{equation*}
\mathcal{G}_{\tp,L}(f) \defeq \Delta_\tp f + iL [Y_1, Y_2] \left( \norm{\nabla_0 f}_\mathbb{G}^{\tp-2} f \right) = 0.
\end{equation*}
A routine expansion of the drift term yields the observation
\begin{eqnarray*}
\mathcal{G}_{\tp,L}(f) &=& \Delta_\tp f + iL cn (y_1 -a)^{n-1} \left( \frac{\tp-2}{2} \norm{\nabla_0 f}^{\tp-4}_\mathbb{G} \left( \dfrac{\partial}{\partial y_2} \norm{\nabla_0 f}^{2}_\mathbb{G}  \right) f + \norm{\nabla_0 f}^{\tp-2}_\mathbb{G} \dfrac{\partial}{\partial y_2}f  \right)\\
&=& 0.
\end{eqnarray*}
Dividing through by $\frac{\tp-2}{2} \norm{\nabla_0 f}^{\tp-4}_\mathbb{G}$ and formally taking the limit $\tp \to \infty$, we obtain:
\begin{equation*}
\mathcal{G}_{\infty, L}(f) = \Delta_\infty f + iL [Y_1, Y_2] \left( \norm{\nabla_0 f}_\mathbb{G}^{2} \right) f.
\end{equation*}
Considering Equation \ref{coregrushin} and formally letting $\tp \to \infty$ yields:
\begin{equation*}
f_{\infty, L}(y_1, y_2) = g(y_1, y_2)^{\frac{1}{2(n+1)}(1-nL)} h(y_1, y_2)^{\frac{1}{2(n+1)}(1+nL)} 
\end{equation*}
where we recall the functions $g(y_1, y_2)$ and $h(y_1, y_2)$ are given by:
\begin{eqnarray*}
g(y_1, y_2) &=& c(y_1 - a)^{n+1} + i(n+1)(y_2-b)\\
h(y_1, y_2) &=& c(y_1 - a)^{n+1} - i(n+1)(y_2-b).
\end{eqnarray*}

We have the following theorem.
\begin{theorem}
The function $f_{\infty,L}$, as above, is a smooth solution to the Dirichlet problem
\begin{eqnarray*}
\left\{\begin{array}{cc}
\mathcal{G}_{\infty,L} f_{\infty,L}(q)=0 & q \in \mathbb{G}_n\setminus\{(a,b)\} \\
0 & q = (a,b).
\end{array}\right.
\end{eqnarray*}
\end{theorem}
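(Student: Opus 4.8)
The plan is to mirror the computation already carried out in the Heisenberg case, exploiting the complete parallelism between the two environments. First I would set
\begin{eqnarray*}
A = \frac{1-nL}{2(n+1)} & \textmd{and} & B = \frac{1+nL}{2(n+1)}
\end{eqnarray*}
so that $f_{\infty,L} = g^A h^B$, and observe that these are exactly the limits of the parameters $\alpha, \beta$ of Section 5 as $\tp\to\infty$. Since by Corollary \ref{gsmooth} the function $f_{\tp,L}$ is a smooth solution of $\mathcal{G}_{\tp,L} f_{\tp,L}=0$ on $\mathbb{G}_n\setminus\{(a,b)\}$ for $\tp$ large, one route is simply to pass to the limit in Equations \eqref{Y1}, \eqref{Y2}, \eqref{Y1Gnormsq}, \eqref{Y2Gnormsq} and invoke continuity of all the quantities involved in $\tp$; I would mention this but, as in the Heisenberg theorem, carry out the formal verification for completeness.

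For the direct computation I would, suppressing arguments and subscripts, record
\begin{eqnarray*}
Y_1 f & = & c(n+1)(y_1-a)^n g^{A-1} h^{B-1}(A h + B g)\\
Y_2 f & = & ic(n+1)(y_1-a)^n g^{A-1} h^{B-1}(A h - B g)\\
\norm{\nabla_0 f}^2 & = & 2c^2(n+1)^2 (y_1-a)^{2n} g^{A+B-1} h^{A+B-1}(A^2+B^2),
\end{eqnarray*}
together with $Y_1\norm{\nabla_0 f}^2$ and $Y_2\norm{\nabla_0 f}^2$ obtained from \eqref{Y1Gnormsq}, \eqref{Y2Gnormsq} with $\alpha,\beta$ replaced by $A,B$. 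Then I would compute $\Delta_\infty f = Y_1\norm{\nabla_0 f}^2\, Y_1 f + Y_2\norm{\nabla_0 f}^2\, Y_2 f$ and, after collecting the $g,h$ powers and using $A+B = \frac{1}{n+1}$ and $A-B = \frac{-nL}{n+1}$, check that it reduces to a single monomial, proportional to $L$, of the form $c^3(n+1)^3(A^2+B^2)(y_1-a)^{3n}(y_2-b)\, g^{2A+B-2}h^{A+2B-2}$ up to an explicit numerical constant. Separately I would compute the drift term
\begin{equation*}
iL[Y_1,Y_2]\big(\norm{\nabla_0 f}^2\big)f = iL\, cn(y_1-a)^{n-1} g^A h^B\, \dfrac{\partial}{\partial y_2}\norm{\nabla_0 f}^2,
\end{equation*}
using $\frac{\partial}{\partial y_2} g = i(n+1) = -\frac{\partial}{\partial y_2} h$, and verify it equals $-\Delta_\infty f$. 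Since $f_{\infty,L}$ is manifestly smooth and nonvanishing on $\mathbb{G}_n\setminus\{(a,b)\}$ (the zero set of $g$ and $h$ meeting only at $(a,b)$), and $f_{\infty,L}(a,b)=0$, the Dirichlet problem is solved.

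The only genuine obstacle is bookkeeping: one must be careful that the exponent arithmetic on $g$ and $h$ is consistent — in particular that the power of $(y_1-a)$ produced by differentiating $g,h$ versus by the factor $c n (y_1-a)^{n-1}$ in $[Y_1,Y_2]$ match up, and that the $\tp$-dependent prefactors $c^{\tp-1}n(n+1)^{\tp-2}$ collapse correctly to $c^2 n(n+1)$-type constants in the limit. There is no analytic subtlety: no distributional argument is needed here because we are away from the singularity, and the $\delta_0$ behavior at $(a,b)$ is not asserted in this theorem (only that $f_{\infty,L}$ vanishes there, which is immediate). I expect the cleanest writeup to be the one-line continuity argument followed by the explicit display of $\Delta_\infty f$ and the drift term, exactly parallel to the Heisenberg proof above.
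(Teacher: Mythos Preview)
Your proposal is correct and follows essentially the same approach as the paper: mention the continuity route via Corollary~\ref{gsmooth} and the limits $\alpha\to A$, $\beta\to B$, then for completeness compute $Y_1 f$, $Y_2 f$, $\|\nabla_0 f\|^2$, $Y_i\|\nabla_0 f\|^2$ directly, assemble $\Delta_\infty f$, and check that $iL[Y_1,Y_2](\|\nabla_0 f\|^2)f$ cancels it. One small bookkeeping slip: the common power of $(y_1-a)$ in the final monomial is $3n-1$, not $3n$ (exactly the exponent-matching issue you flagged yourself).
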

\begin{proof}
We may prove this theorem by letting $\tp\to\infty$ in Equations \eqref{Y1},\eqref{Y2},\eqref{Y1Gnormsq},\eqref{Y2Gnormsq} and invoking continuity (cf. Corollary \ref{gsmooth}). However, for completeness we compute formally. We let:
\begin{eqnarray*}
A = \frac{1}{2(n+1)}(1-nL) & \textmd{and} & B = \frac{1}{2(n+1)}(1+nL)
\end{eqnarray*}
and, suppressing arguments and subscripts, compute:
\begin{eqnarray*}
Y_1 f & = & c(n+1)(y_1 - a)^n g^{A -1} h^{B -1} (A h + B g)\\
Y_2 f & = & ic(n+1) (y_1 - a)^n g^{A -1} h^{B -1} (A h - B g)\\
\norm{\nabla_0 f}^2 & = & 2c^2 (n+1)^2 (y_1 - a)^{2n} g^{A + B -1} h^{A + B -1} (A^2 + B^2)\\
Y_1 \norm{\nabla_0 f}^2 & = & 4c^2 (n+1)^2 (A^2 + B^2) (y_1 - a)^{2n-1} g^{A + B -2} h^{A + B -2} \\
& &\mbox{}  \times  \big( ngh + c^2 (n+1) (A + B -1)(y_1 - a)^{2n+2} \big)\\
\textmd{and\ }Y_2 \norm{\nabla_0 f}^2 & = & 4c^3 (n+1)^4 (A^2 + B^2) (y_1 - a)^{3n} (y_2 - b) \\
&&  \mbox{} \times (\alpha + \beta -1) g^{A + B -2} h^{A + B -2}
\end{eqnarray*}
so that
\begin{eqnarray*}
\Delta_\infty f &=& Y_1 \norm{\nabla_0 f}^2 Y_1 f + Y_2 \norm{\nabla_0 f}^2 Y_2 f \\
& = & 4 c^3(n+1)^3(A^2+B^2)(y_1-a)^{3n-1}g^{2A+B-3}h^{A+2B-3}\\
&&\mbox{}  \times\Big((A h+B g)\big(ngh+c^2(n+1)(A +B -1)(y_1-a)^{2n+2}\big)\\
&&\mbox{} + ic(n+1)^2(y_1-a)^{n+1}(y_2-b)(A +B-1)(A h -B g)\Big)\\
&=& 4iL c^3 (n+1)^3 n^2 (A^2 + B^2) (y_1 - a)^{3n-1} (y_2 - b) g^{2A + B - 2} h^{A + 2B -2}.
\end{eqnarray*}
We also compute:
\begin{eqnarray*}
iL[Y_1, Y_2] \left( \norm{\nabla_0 f}^2 \right) f &=& iL g^A h^B \left( cn(y_1 - a)^{n-1} \dfrac{\partial}{\partial y_2}  \norm{\nabla_0 f}^2 \right)\\
&=& -4iL c^3 (n+1)^3 n^2 (A^2 + B^2) g^{2A + B - 2} h^{A + 2B -2}\\
&& \mbox{} \times  (y_1 - a)^{3n-1} (y_2 - b).
\end{eqnarray*}
The theorem follows.

\end{proof}

In particular, combined with \cite{BG}, we have shown the following diagram commutes in \\ $\mathbb{G}_n\setminus\{(a,b)\}$:
$$\begin{CD}
\mathcal{G}_{\tp, L} f_{\tp,L}=0 @>>{\tp\to\infty}>\mathcal{G}_{\infty, L}f_{\infty,L}=0 \\
@VV{L\to 0}V                @VV{L\to 0}V \\
\Delta_{\tp}f_{\tp,0}=0 @>>{\tp\to\infty}> \Delta_{\infty}f_{\infty,0}=0
\end{CD}$$


\end{document}